\documentclass[11pt, reqno, twoside, makeidx]{amsart}
\usepackage[margin=2.7cm, marginpar=1cm]{geometry}

\usepackage{wrapfig}
\usepackage{marginnote}
\usepackage{hyperref}
\usepackage[english]{babel}
\usepackage[utf8]{inputenc}
\usepackage[T1]{fontenc}
\usepackage{amsmath,amsthm,amssymb,amsfonts}
\numberwithin{equation}{section}
\usepackage{enumerate}
\usepackage{enumitem}
\usepackage{faktor}
\usepackage{dsfont}
\usepackage{nicematrix}
\usepackage{scalerel,stackengine}
\usepackage{textcomp}
\usepackage{graphicx}
\usepackage{extarrows}
\usepackage{epigraph}
\usepackage{graphicx}
\usepackage{subcaption}
\usepackage{sidecap}
\usepackage{indentfirst}
\usepackage{tikz}
\usepackage{tikz-cd}
\usepackage{tkz-euclide}
\usetikzlibrary{shapes.misc,arrows,matrix,patterns,decorations.markings,positioning}
\tikzset{cross/.style={cross out, draw=black, minimum size=2*(#1-\pgflinewidth), inner sep=0pt, outer sep=0pt},
cross/.default={4.5pt}}
\usepackage{pgfplots}
\usepackage{caption}
\usepackage{float}
\usepackage{color}
\usepackage{epstopdf}
\usepackage{epsfig}
\usepackage{stmaryrd}
\usepackage{color}
\usepackage{geometry}
\usepackage{multicol}
\usepackage{verbatim}
\usepackage{wrapfig}
\usepackage{float}

\renewcommand{\geq}{\geqslant}
\renewcommand{\leq}{\leqslant} 
\renewcommand{\epsilon}{\varepsilon}

\newcommand{\Z}{\mathbb{Z}}
\newcommand{\Q}{\mathbb{Q}}

\DeclareFontFamily{U}{mathx}{\hyphenchar\font45}
\DeclareFontShape{U}{mathx}{m}{n}{
      <5> <6> <7> <8> <9> <10>
      <10.95> <12> <14.4> <17.28> <20.74> <24.88>
      mathx10
      }{}
\DeclareSymbolFont{mathx}{U}{mathx}{m}{n}
\DeclareFontSubstitution{U}{mathx}{m}{n}
\DeclareMathAccent{\widecheck}{0}{mathx}{"71}
\DeclareMathAccent{\wideparen}{0}{mathx}{"75}

\newtheorem{teo}{Theorem}[section]
\newtheorem*{teo*}{Theorem}
\newtheorem{lemma}[teo]{Lemma}
\newtheorem{prop}[teo]{Proposition}
\newtheorem*{prop*}{Proposition}

\usepackage{xpatch}
\makeatletter
\xpatchcmd{\@thm}{\thm@headpunct{.}}{\thm@headpunct{}}{}{}
\makeatother

\pgfplotsset{compat=1.18}
\begin{document}
\title[Brieskorn spheres with two fillable contact structures]{Brieskorn spheres with two fillable contact structures}
\author{Alberto Cavallo}
\address{HUN-REN Alfr\'ed R\'enyi Insitute of Mathematics, Budapest 1053, Hungary}
\email{acavallo@impan.pl}
\subjclass[2020]{57K18, 57K33, 32Q35}

\begin{abstract}
 Applying our recent classification of negative-twisting tight contact structures on Seifert fibered spaces whose base orbifold is a sphere, we provide the complete list of all the Brieskorn spheres carrying at most two symplectically fillable structures, up to isotopy, compatible with a given orientation. 
\end{abstract}

\maketitle

\thispagestyle{empty}

\section{Introduction}
Brieskorn spheres are prominent in low-dimensional topology because they provide a rich and tractable family of examples where the interaction of singularity theory with 3-manifolds, and smooth 4-manifolds can be studied explicitly. Arising as links of isolated complex surface singularities, see \cite{Saveliev} for more details, they admit a description as Seifert fibered spaces and as boundaries of plumbed 4-manifolds, making them especially useful for concrete computations in gauge theory and Floer theory \cite{FS,OSz-negative,OSz-fullpath}.

From singularity theory we know that Brieskorn spheres always carry a Stein fillable structure, with at least one orientation; namely, the Milnor fillable structure $\xi_\text{can}$ obtained by restricting the field of complex tangent planes to the link of the corresponding singularity. Moreover, from the results in \cite{LM} a (Stein) fillable structure actually exists on both orientations, except for $-\Sigma(2,3,5)$ which has no tight structure at all, see \cite{EH}.

Very recently, together with Matkovi\v c we classified \cite{CM-negative,CM} all the negative-twisting tight contact structures on the Seifert fibered spaces $M(e_0;r_1,...,r_n)$, and Brieskorn spheres are among these manifolds. It follows from our results, see \cite[Theorem 1.9]{CM}, that $\Sigma(2,3,5),$ $-\Sigma(2,3,7)$ and $-\Sigma(2,3,11)$ are the only Brieskorn spheres which carry a unique tight structure, up to isotopy. 

Later on, in a joint work with Alfieri \cite{ACM} we also determined which canonically oriented Brieskorn spheres have vanishing Heegaard Floer correction term and admit at most two tight structures. Note that it is known from the literature \cite[Corollary 9.7 and Theorem 1.11]{OSz-negative} that, for a Brieskorn sphere $Y$, the correction term is zero if and only if the intersection form of its plumbing star-shaped graph is diagonalizable; moreover, by \cite[Corollary 3]{ImC-e} this can happen only when, after orienting $Y$ so that its Seifert coefficients satisfy $e(Y):=r_1+...+r_n+e_0<0$, one has $e_0=-1$.

Here, we build upon these results and complete the list of all the Brieskorn spheres, with a given orientation, carrying at most two symplectically fillable structures. We recall that a Seifert fibered space is called \emph{small} if its standard graph has three legs.

\begin{teo}
 \label{teo:two}
 The only Brieskorn spheres which carry two fillable structures, up to isotopy, compatible with a given orientation are listed in Table \ref{Table}, together with their invariants. Among these, the small Brieskorn spheres carry no other tight structure, while the others also carry non-fillable tight structures which contain Giroux torsion.
\end{teo}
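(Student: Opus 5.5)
The plan is to reduce the count of fillable structures to an explicit count coming from the classification in \cite{CM-negative,CM}, and then solve the resulting arithmetic problem on the Seifert invariants. Fix an orientation of a Brieskorn sphere $Y=\Sigma(a_1,\dots,a_n)$ and write it as $M(e_0;r_1,\dots,r_n)$ with $0<r_i<1$.

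\textbf{Counting fillable structures.} The count splits according to the sign of $e(Y)$.
\begin{itemize}
\item When $e(Y)<0$ (the canonical orientation), every tight structure is negative-twisting.
\item When $e(Y)>0$ (the reversed orientation), we pass to the normal form with $e_0\le -1$. Tight structures are then negative-twisting, except possibly for Giroux torsion ones. The latter appear only when the number of legs and $e_0$ allow it.
\end{itemize}
In both cases the classification of \cite{CM} describes the negative-twisting structures through Legendrian surgery diagrams on the star-shaped graph. These are Stein fillable, and the number of them up to isotopy is the product of the choices of rotation numbers on the unknots of each leg and of the central vertex, where the central vertex contributes roughly $|e_0|-n+1$ choices. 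Distinct diagrams are distinguished by the Heegaard Floer contact invariant or by $c_1$ of the Stein fillings. So the number of fillable structures is at least this combinatorial product $N(Y)$. Conversely, structures with Giroux torsion are not fillable (Gay), so the number of fillable structures equals $N(Y)$ exactly.

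\textbf{Solving $N(Y)\le 2$.} The inequality forces almost every continued fraction $-1/r_i=[a^i_1,\dots,a^i_{k_i}]$ to consist of $-2$'s, with at most one entry equal to $-3$, or with $e_0$ giving one extra choice. Combined with the Brieskorn condition that the $a_i$ are pairwise coprime and $|H_1|=1$, this gives a finite list of families.

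\begin{itemize}
\item For $e(Y)<0$ with $e_0=-1$, we import the list from \cite{ACM}.
\item For $e(Y)<0$ with $e_0\le -2$, the constraint that the central vertex gives at most two choices, together with homology-sphere arithmetic, should force $n=3$ and small $a_i$. We enumerate these by hand.
\item For the reversed orientation, we do the analogous computation for $-\Sigma$ after renormalizing.
\end{itemize}
Each surviving case is checked against \cite[Theorem 1.9]{CM} so that the exactly-one cases are excluded. This produces Table \ref{Table}.

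\textbf{The second sentence of the theorem.} For small Seifert spaces with $e_0\le -1$, the classification says that all tight structures are given by the surgery diagrams, hence they are fillable. For $n\ge 4$ (and the $e_0=-1$ cases where the central twisting allows it), \cite{CM} also gives structures with Giroux torsion. These are tight, and non-fillable by Gay's theorem.

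The main obstacle is the arithmetic enumeration. One has to show that the condition $N(Y)\le 2$ together with $\gcd$ and unimodularity constraints leaves only finitely many families, with no infinite family slipping through. I would first bound $n$ and the lengths of the continued fractions using the product formula for $N(Y)$. Then I would use $e(Y)=\pm 1/(a_1\cdots a_n)$ to pin down the $a_i$.
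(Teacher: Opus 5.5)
\textbf{The gap is in the reversed orientation.} You assume that on $-Y$ every tight structure is either negative-twisting, hence one of the Legendrian surgery diagrams on the star-shaped graph, or contains Giroux torsion. From this you conclude that the fillable structures are counted exactly by the surgery diagrams. That dichotomy is false. By \cite[Corollary 1.8 and Section 8]{CM}, every $-Y$ other than $-\Sigma(2,3,6k\pm1)$ carries a structure tangent to the fibers that is symplectically but not Stein fillable. Being fillable, it has no Giroux torsion; being non-Stein-fillable, it is none of your diagrams.

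Ignoring it gives wrong answers. Take $-\Sigma(2,5,7)=M(-2;\frac12,\frac45,\frac57)$, with legs $[-2]$, $[-2,-2,-2,-2]$, $[-2,-2,-3]$. Take also $-\Sigma(3,4,5)=M(-2;\frac23,\frac34,\frac35)$, with legs $[-2,-2]$, $[-2,-2,-2]$, $[-2,-3]$. Each has exactly two surgery diagrams, so your procedure would add both to the table. But Theorem \ref{teo:two} lists only canonically oriented spheres.

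The paper does not compute any count on $-Y$. The extra fillable structure, combined with \cite[Remark 1.14]{CM}, already forces at least three fillable structures. The exceptions $-\Sigma(2,3,6k\pm1)$ are settled by their known classifications \cite{EH,GvHM,Tosun}, which give $k(k\pm1)/2\neq2$ tight structures.

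\textbf{On the canonical side} your outline matches the paper's, but several details are wrong or missing.
\begin{enumerate}
\item The number of surgery diagrams is $|e_0+1|\prod_{i,j}|m^i_j+1|$. The center contributes $|e_0+1|$ choices, not $|e_0|-n+1$ (which would be $0$ for $e_0=-2$, $n=3$). So two structures means exactly one vertex of framing $-3$ and all others $-2$.
\item The right justification that nothing else is fillable is \cite[Theorem 1.1]{CM-negative}. Your claim that all tight structures are negative-twisting when $e(Y)<0$ contradicts your own later use of Giroux torsion structures for $n\ge4$.
\item Your expectation that $e_0\le-2$ forces $n=3$ and small $a_i$ is false. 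The case $e_0=-3$, $n=4$ gives $\Sigma(2,3,7,41)$ and $\Sigma(2,3,11,13)$; these are the only non-small entries, and exactly the ones where the Giroux torsion clause applies. The case $e_0=-2$ gives the infinite family $\Sigma(2,3,6k-1)$.
\item The paper bounds $n$ using $e(Y)<0$. For $e_0=-3$, $n=3$ it uses the fact that a graph without bad vertices yields an L-space. It then solves the Diophantine problems in Propositions \ref{prop:number_theory2}, \ref{prop:new} and \ref{prop:number_theory}; you leave this enumeration undone.
\item You do not address the $d_3$ and $d$ columns of the table. The paper obtains these from the Schur-complement formula and the full path algorithm.
\end{enumerate}
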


Note that when $Y$ has exactly two fillable structures then they are the Milnor fillable one $\xi_\text{can}$, and its conjugate $\overline\xi_\text{can}$. Therefore, on each $Y$ small in Table \ref{Table} there is always a unique tight structure up to contactomorphism. On a generic Seifert fibered space the latter one is a much weaker condition; nonetheless, on Brieskorn spheres Theorem \ref{teo:two} implies the following result.

\begin{table}[ht]
 \centering\renewcommand{\arraystretch}{2}
 \begin{NiceTabular}{|c|c|c||c|c|c|}
  \hline 
  $Y=\Sigma(a_1,a_2,a_3)$ & $d_3(\xi_\text{can})$ & $d(Y)$ & $Y=\Sigma(a_1,a_2,a_3)$ & $d_3(\xi_\text{can})$ & $d(Y)$ \\
  \hline\hline 
  $\Sigma(2,3,6k+1)$ with $k\geq1$ & 0 & 0 & $\Sigma(2,3,6k-1)$ with $k\geq2$ & 2 & 2 \\
  \hline  $\Sigma(2,5,7)$ & 0 & 0 & $\Sigma(2,5,9)$ & 2 & 2 \\
  \hline  $\Sigma(3,4,5)$ & 0 & 0 & $\Sigma(3,7,10)$ & $-6$ & 2 \\ 
  \hline  $\Sigma(2,7,11)$ & 0 & 2 & $\Sigma(3,7,19)$ & $-18$ & 2 \\ 
  \hline  $\Sigma(3,4,11)$ & 0 & 2 & $\Sigma(3,8,11)$ & $-10$ & 2 \\ 
  \hline  $\Sigma(3,5,7)$ & 0 & 2 & $\Sigma(4,5,9)$ & $-6$ & 2 \\ 
  \hline   &  &  & $\Sigma(3,5,14)$ & $-4$ & 4 \\ 
  \hline\hline
  $Y=\Sigma(a_1,a_2,a_3,a_4)$ & $d_3(\xi_\text{can})$ & $d(Y)$ & $Y=\Sigma(a_1,a_2,a_3,a_4)$ & $d_3(\xi_\text{can})$ & $d(Y)$ \\
  \hline
  $\Sigma(2,3,7,41)$ & $-418$ & 12 & $\Sigma(2,3,11,13)$ & $-208$ & 6 \\
  \hline
 \end{NiceTabular}
 \caption{\smaller[1]{The complete list of all the canonically oriented Brieskorn spheres whose only fillable structures are $\xi_\text{can}$ and $\overline\xi_\text{can}$, and their $d_3$-invariant and correction term $d$.}}  
 \label{Table}\renewcommand{\arraystretch}{1}
\end{table}

\begin{prop}
 \label{prop:contactomorphism}
 The only Brieskorn spheres carrying a unique fillable structure up to contactomorphism are $\Sigma(2,3,5),-\Sigma(2,3,7)$ and $-\Sigma(2,3,11)$, together with the ones listed in Table \ref{Table}.   
\end{prop}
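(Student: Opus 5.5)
We prove the two inclusions separately.

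\emph{The listed spheres have a unique fillable structure up to contactomorphism.} For $\Sigma(2,3,5)$, $-\Sigma(2,3,7)$ and $-\Sigma(2,3,11)$, \cite[Theorem 1.9]{CM} gives a unique tight structure up to isotopy. It is fillable: the Milnor fillable one on $\Sigma(2,3,5)$, and by \cite{LM} on the two reversed spheres. For a sphere in Table~\ref{Table}, Theorem~\ref{teo:two} says the only fillable structures up to isotopy are $\xi_\text{can}$ and $\overline\xi_\text{can}$. Conjugation $\xi\mapsto\overline\xi$ is realized by an orientation-preserving diffeomorphism, so these two are contactomorphic and there is a single fillable structure up to contactomorphism.

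\emph{Every other sphere has at least two fillable structures up to contactomorphism.} Let $Y$ be a Brieskorn sphere with a fixed orientation that is not in the list. Then $Y$ carries a fillable structure: \cite{LM} covers both orientations, and $-\Sigma(2,3,5)$ is excluded since it has no tight structure by \cite{EH}.
\begin{itemize}
\item By Theorem~\ref{teo:two} and \cite[Theorem 1.9]{CM}, $Y$ has at least three fillable structures up to isotopy.
\item Each isotopy class meets the conjugation-symmetric orbit $\{\xi,\overline\xi\}$ of at most two classes.
\item We need more: at least two distinct \emph{contactomorphism} classes among the fillable structures.
\end{itemize}

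\emph{Controlling contactomorphisms.} For a Seifert fibered space with at least three singular fibres and base $S^2$, the mapping class group is generated by fibre-preserving diffeomorphisms. On Brieskorn spheres with pairwise coprime $a_i$ there is no permutation symmetry of fibres of equal type. So the orientation-preserving mapping class group should be $\Z/2$ (or trivial), with the nontrivial element inducing conjugation. Hence a contactomorphism class is a union of at most two isotopy classes, namely $\xi$ and $\overline\xi$. With at least three fillable isotopy classes, there are at least two contactomorphism classes.

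One could instead argue with invariants. Fillable structures from the classification in \cite{CM-negative,CM} arise from Stein fillings via Legendrian surgery on the star-shaped graph, with distinct rotation-number vectors. Their $c^+$ Heegaard Floer invariants are then linearly independent in $HF^+(-Y)$ by Plamenevskaya/Lisca–Matić type arguments. The problem is that linear independence separates isotopy classes but not contactomorphism classes, since a diffeomorphism acts on $HF^+$. This again reduces to knowing that the diffeomorphism action is at most conjugation.

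\emph{Main obstacle.} The hard step is justifying that every orientation-preserving self-diffeomorphism of $Y$ is isotopic to the identity or to the conjugating involution. The route I would take is Boileau–Otal: diffeomorphisms of Seifert fibered spaces with infinite $\pi_1$ are isotopic to fibre-preserving ones. The exceptional finite-$\pi_1$ case $\Sigma(2,3,5)$ is already covered. A fibre-preserving diffeomorphism must fix each singular fibre, because the multiplicities are distinct. So it either preserves or reverses both the base and fibre orientations. The latter is fibrewise reflection, which acts on the Euler class/$\mathrm{Spin}^c$ structure as conjugation. Combining this with the count from Theorem~\ref{teo:two} finishes the proof.
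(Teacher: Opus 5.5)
Your overall plan is the paper's: reduce ``contactomorphic'' to ``isotopic or conjugate'' via the mapping class group, then count with Theorem~\ref{teo:two}. The paper simply imports the mapping-class input from \cite[Theorem 7.6]{DHM}. The gap is in your own derivation of that input.

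After Boileau--Otal (Waldhausen in the Haken case) you have a fibre-preserving $\varphi$ that fixes every singular fibre and preserves both base and fibre orientations. You then tacitly treat it as isotopic to the identity. For three singular fibres this is fine, because the pure mapping class group of $S^2$ with three marked points is trivial. For $n\geq4$ singular fibres it is false. Take a Dehn twist $f$ of the base about a curve with at least two cone points on each side. It is supported away from the cone points, so $f\times\mathrm{id}$ on the trivial circle bundle over the complement, extended by the identity over the fibred solid tori, is such a $\varphi$. Since $Y$ is Haken with hyperbolic base orbifold and the fibre generates the centre of $\pi_1(Y)$, $\varphi$ induces the twist on $\pi_1^{\mathrm{orb}}=\pi_1(Y)/\Z$, and this is not inner. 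So $\varphi$ has infinite order in the mapping class group. Hence ``the mapping class group is $\Z/2$'' fails precisely for the Brieskorn spheres with four or more fibres that are not in the list. There your bound of at most two isotopy classes per contactomorphism class is unsupported. For $n\geq4$, whatever input one uses can only be a statement about the induced action, not about isotopy.

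What is really needed is that such lifts fix every fillable isotopy class, and this can be shown as follows.
\begin{itemize}
\item $f\times\mathrm{id}$ extends over the plumbing $X_\Gamma$ (a bundle map over the central sphere, the identity on the legs) and acts trivially on $H_2(X_\Gamma)$, so it fixes every $\mathrm{Spin}^c$ structure there.
\item By naturality of the cobordism maps (and the blow-up formula when $e_0=-1$), this gives $c^+(\varphi_*\xi)=c^+(\xi)$ for every fillable $\xi$ of \cite[Theorem 1.1]{CM-negative}.
\item These structures come from Stein structures with pairwise distinct $c_1$, so their $c^+$ are distinct (Plamenevskaya), and therefore $\varphi_*\xi$ is isotopic to $\xi$.
\end{itemize}
On $-Y$ you can bypass all this. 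The symplectically but not Stein fillable structure of \cite{CM} cannot be contactomorphic to a Stein fillable one, and one exists by \cite{LM}. The remaining $-\Sigma(2,3,6k\pm1)$ are small, where your argument works.

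A smaller slip: \cite[Theorem 1.9]{CM} is about tight structures. Combined with Theorem~\ref{teo:two} it does not exclude a sphere with exactly one fillable isotopy class. Ruling that out requires the counts of \cite{CM-negative} and of the proof of Theorem~\ref{teo:two}.
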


For completeness we mention that the correction terms of $\Sigma(2,3,5),-\Sigma(2,3,7)$ and $-\Sigma(2,3,11)$ are equal to $2,0$ and $-2$ respectively.

\subsection*{Acknowledgments} {\smaller[1] I would like to thank the Matematiska institutionen at Uppsala universitet for their friendly hospitality. The author has been partially supported by the HORIZON-ERC-2023-ADG 101141468 KnotSurf4d project. }

\section{Brieskorn spheres with at most two structures}
We determine when $Y=\Sigma(a_1,...,a_n)$ carries exactly two fillable structures. We recall that the manifold $Y$ always has at least one Stein fillable structure; namely, the structure $\xi_\text{can}$. 

\subsection{The case when \texorpdfstring{$\mathbf{d_3}$}{d3} is vanishing}
\label{subsection:first}
We prove that the only canonically oriented Brieskorn spheres $Y=\Sigma(a_1,...,a_n)$ as above and with vanishing $d_3(\xi_{\text{can}})$ are $\Sigma(3,4,5),$ $\Sigma(2,5,7)$ and the family $\Sigma(2,3,6k+1)$ for $k\geq1$, which bound smooth rational homology balls \cite{Savk} thus they also have vanishing correction term, and $\Sigma(2,7,11),$ $\Sigma(3,5,7)$ and $\Sigma(3,4,11)$, see Figures \ref{Family3} and \ref{Family1}.

We recall our terminology in \cite{CM-negative,CM,ACM}, see also \cite{Saveliev}: given a Seifert fibered space $M=M(e_0;r_1,...,r_n)$ where $e_0\in\Z$ and $r_i\in\Q\cap(0,1)$ we define its standard graph $\Gamma$ as the star-shaped graph whose center as framing $e_0$, while each leg has vertices given by the negative continued fraction $-\frac{1}{r_i}=[m^i_1,...,m^i_{k_i}]$; in particular, on each leg the framings can be at most $-2$. In addition, the plumbed 4-manifold determined by $\Gamma$ has negative-definite intersection form if and only if $e(M)<0$; this is the case of canonically oriented Brieskorn spheres.

\begin{figure}[t] 
     \begin{tikzpicture}[scale=0.7]
    \tkzDefPoints{0/0/A, 1.5/1/B, 1.5/-1/D, 3/-1/E, 1.5/0/C}  
    \tkzDrawSegment(A,B)\tkzDrawSegment(A,D)\tkzDrawSegment(E,D)\tkzDrawSegment(B,A)\tkzDrawSegment(A,C)
    \tkzDrawPoints[fill,black,size=5](A,B,C,D,E)
     \tkzLabelPoint[above left](A){$-1$} 
     \tkzLabelPoint[above left](B){$-3$}\tkzLabelPoint[right](C){$-4$}
     \tkzLabelPoint[below left](D){$-3$}\tkzLabelPoint[below left](E){$-2$}
       \end{tikzpicture}\hspace{1cm} 
       \begin{tikzpicture}[scale=0.7]
    \tkzDefPoints{0/0/A, 1.5/1/B, 1.5/-1/D, 3/-1/E, 1.5/0/C}  
    \tkzDrawSegment(A,B)\tkzDrawSegment(A,D)\tkzDrawSegment(E,D)\tkzDrawSegment(B,A)\tkzDrawSegment(A,C)
    \tkzDrawPoints[fill,black,size=5](A,B,C,D,E)
     \tkzLabelPoint[above left](A){$-1$} 
     \tkzLabelPoint[above left](B){$-2$}\tkzLabelPoint[right](C){$-5$}\tkzLabelPoint[below left](D){$-4$}\tkzLabelPoint[below left](E){$-2$}
       \end{tikzpicture}\hspace{1cm}
     \begin{tikzpicture}[scale=0.7]
    \tkzDefPoints{0/0/A, 1.5/1/B, 1.5/-1/D, 3/-1/E, 5/-1/F, 1.5/0/C}  
    \tkzDefPoints{3.5/-1/X, 4.5/-1/Y}\tkzDefPoints{3.9/-1/P, 4/-1/Q, 4.1/-1/R}
    \tkzDrawPoints[fill,black,size=1](P,Q,R)
    \tkzDrawSegment(A,B)\tkzDrawSegment(A,D)\tkzDrawSegment(E,D)\tkzDrawSegment(E,X)\tkzDrawSegment(Y,F)\tkzDrawSegment(A,C)
    \tkzDrawPoints[fill,black,size=5](A,B,C,D,E,F)
     \tkzLabelPoint[above left](A){$-1$} \tkzLabelPoint[above left](B){$-2$}\tkzLabelPoint[right](C){$-3$}
     \tkzLabelPoint[below left](D){$-7$}\tkzLabelPoint[below left](E){$-2$}\tkzLabelPoint[below left](F){$-2$}
\end{tikzpicture}
     \caption{\smaller[1]{The standard graph of $\Sigma(3,4,5)$ (left), $\Sigma(2,5,7)$ (middle) and $\Sigma(2,3,6k+1)$ with $k\geq1$ (right). There are $k-1$ vertices with framing $-2$ on the third leg of the graph of $\Sigma(2,3,6k+1)$.}}
     \label{Family3}
\end{figure} 

We already know from \cite[Proposition 2.2]{ACM} that if a $Y$ with the properties we want has standard graph which satisfies $e_0=-1$ then it is as in Figure \ref{Family3}; hence, here we assume $e_0\leq-2$. In addition, we proved in \cite[Theorem 1.1]{CM-negative} that all the fillable structures on a negative-definite Seifert fibered space are presented by Legendrian surgery on the link obtained by blowing down $\Gamma$; in the case we have here, each framing is at most $-2$; for this reason, there is no blow-down available and the link is then given by $\Gamma$ itself. The number of structures is determined combinatorially by the Seifert coefficients as explained in \cite[Section 5]{CM-negative} and it is equal to \[|e_0+1|\cdot\prod_{i=1}^n\prod_{j=1}^{k_i}|m_{j}^i+1|\:;\] therefore, we can have two fillable structures only when there is a unique vertex in $\Gamma$ with framing $-3$, while all the others have framing $-2$. 

We start by assuming that $e_0=-3$: in this case the number of legs is at least $4$; otherwise, the graph would have no bad vertices and $Y$ would be an $L$-space \cite{OSz-fullpath}, implying that $Y$ is either $S^3$ or the Poincar\'e sphere $\Sigma(2,3,5)$. The latter manifold has $\Gamma=-E_8$ whose center is not $-3$. If there were at least 6 legs then \[e(Y)=e_0+r_1+...+r_n\geq-3+6\cdot\dfrac{1}{2}=0\:,\] thus $Y$ cannot be negative-definite. 

We consider the case when there are $n=4$ legs. We take the dual of the standard graph $\Gamma$, which represents $-Y=M(-1;\frac{1}{a},\frac{1}{b},\frac{1}{c},\frac{1}{d})$ with $a,b,c,d\geq2$ integers. Since we want $Y$ to be a Brieskorn sphere, we obtain that $a,b,c$ and $d$ are also pairwise coprime and $|H_1(Y)|=1$; hence, they need to satisfy the condition \[\dfrac{1}{a}+\dfrac{1}{b}+\dfrac{1}{c}+\dfrac{1}{d}=1+\dfrac{1}{abcd}\] which admits solutions, as we show in Proposition \ref{prop:number_theory2}, but since the vector $V_{\text{can}}$ associated to $\Gamma$ has all the coordinates vanishing, except a single one which is $-1$, we immediately obtain\footnote{\label{mat}The linear algebra computations were performed using the online tools available at \url{https://matrixcalc.org/}\:.} that \begin{equation}d_3(\xi_{\text{can}})=\dfrac{V_{\text{can}}^TQ^{-1}V_{\text{can}}+|\Gamma|}{4}=\dfrac{1}{4}\Big(-abcd+a+b+c+d-3\Big)
      \label{eq:d33}
     \end{equation} which clearly is never zero as \[abcd\geq(a+b)(c+d)>a+b+c+d\:.\]

We now assume that there are $n=5$ legs. Similarly to the previous case, we have that $-Y=M(-2;\frac{1}{a},\frac{1}{b},\frac{1}{c},\frac{1}{d},\frac{1}{e})$ with $a,b,c,d,e\geq2$ pairwise coprime integers. Then \[0<e(-Y)=\dfrac{1}{a}+\dfrac{1}{b}+\dfrac{1}{c}+\dfrac{1}{d}+\dfrac{1}{e}-2\leq\dfrac{1}{2}+\dfrac{1}{3}+\dfrac{1}{5}+\dfrac{1}{7}+\dfrac{1}{11}-2<0\]
leading to no possible solution, where here we are assuming that $e(-Y)>0$ as we want $Y$ to be canonically oriented.
     
     \begin{figure}[ht] 
     \begin{tikzpicture}[scale=0.9]
    \tkzDefPoints{0/0/A, 1.5/1/B, 1.5/-1/D, 1.5/0/C, 3.5/1/E, 5.5/1/F, 3.5/0/G, 3.5/-1/H}  
     \tkzDefPoints{2/1/X, 3/1/Y}\tkzDefPoints{2.4/1/P, 2.5/1/Q, 2.6/1/R}\tkzDefPoints{4/1/X', 5/1/Y'}\tkzDefPoints{4.4/1/P', 4.5/1/Q', 4.6/1/R'}
     \tkzDefPoints{2/0/X'', 3/0/Y''}\tkzDefPoints{2.4/0/P'', 2.5/0/Q'', 2.6/0/R''}
     \tkzDefPoints{2/-1/X''', 3/-1/Y'''}\tkzDefPoints{2.4/-1/P''', 2.5/-1/Q''', 2.6/-1/R'''}
     
    \tkzDrawSegment(A,B)\tkzDrawSegment(A,D)\tkzDrawSegment(A,C)\tkzDrawSegment(B,X)\tkzDrawSegment(E,Y)\tkzDrawSegment(E,X')\tkzDrawSegment(F,Y')
    \tkzDrawSegment(C,X'')\tkzDrawSegment(G,Y'')\tkzDrawSegment(D,X''')\tkzDrawSegment(H,Y''')
    \tkzDrawPoints[fill,black,size=5](A,B,C,D,E,F,G,H)\tkzDrawPoints[fill,black,size=1](P,Q,R,P',Q',R',P'',Q'',R'',P''',Q''',R''')
    \tkzLabelPoint[below left](A){$-2$}\tkzLabelPoint[above right](B){$-2$}\tkzLabelPoint[below right](C){$-2$}\tkzLabelPoint[below right](D){$-2$}\tkzLabelPoint[above right](E){$-3$}\tkzLabelPoint[above right](F){$-2$}\tkzLabelPoint[below right](G){$-2$}\tkzLabelPoint[below right](H){$-2$}
    \tkzLabelPoint[below](E){$S$}
       \end{tikzpicture}\hspace{1cm}
       \begin{tikzpicture}[scale=0.9]
    \tkzDefPoints{0/0/A, 1.5/1/B, 1.5/-1/D, 1.5/0/C, 3/1/E}  
    \tkzDrawSegment(A,B)\tkzDrawSegment(A,D)\tkzDrawSegment(B,A)\tkzDrawSegment(A,C)\tkzDrawSegment(B,E)
    \tkzDrawPoints[fill,black,size=5](A,B,C,D,E)
     \tkzLabelPoint[below left](A){$-1$} 
     \tkzLabelPoint[above left](B){$-s-2$}\tkzLabelPoint[below right](C){$-a$}\tkzLabelPoint[below right](D){$-b$}\tkzLabelPoint[above right](E){$-t-2$}
       \end{tikzpicture}
     \caption{\smaller[1]{The standard graph $\Gamma$ representing $Y$ (left) and its dual graph (right) representing $-Y$, when $e_0=-2$. There are $s$ vertices with framing $-2$ on the first leg before $S$, and $t$ ones after $S$; moreover, the number of vertices with framing $-2$ in $\Gamma$ is $a-1$ on the second leg, and $b-1$ on the third one.}}
     \label{Family2}
\end{figure}

We now assume that $e_0=-2$, and that the vertex $S$ with framing $-3$ appears on the first leg; the same argument of the previous case allows us to restrict to $n=3,4$. When $n=4$ we have that the first leg starts with the $-3$-vertex, and then continues with a string of $t$ vertices with framing $-2$; hence, one has $-Y=M(-2;\frac{t+2}{2t+3},\frac{1}{a},\frac{1}{b},\frac{1}{c})$ with $a,b,c\geq2$ integers. Then we need \[\dfrac{1}{a}+\dfrac{1}{b}+\dfrac{1}{c}+\dfrac{t+2}{2t+3}=2+\dfrac{1}{abc(2t+3)}\] which has no solutions, see Proposition \ref{prop:new}.

When $n=3$ the first leg contains a string of $s$ (resp. $t$) vertices with framing $-2$ before (resp. after) the vertex $S$.  From a direct computation one has $-Y=M(-1;\frac{t+2}{(t+2)(s+2)-1},\frac{1}{a},\frac{1}{b})$ with $s,t\geq0$ and $a,b\geq2$ integers, see Figure \ref{Family2}.

Using the Schur complement as before\footref{mat}, we have that \begin{equation}d_3(\xi_{\text{can}})=\dfrac{1}{4}\Big(s+t+a+b-(t+1)[ab-(s+1)(ab-a-b)]\Big)\:;
      \label{eq:d3}
     \end{equation} 
moreover, from the dual graph of $\Gamma$ we can determine the order of the first homology group of $M$: \[|H_1(Y)|= (t+2)ab-[(s+2)(t+2)-1](ab-a-b)\:.\] Hence, together with the fact that $a,b$ and $(s+2)(t+2)-1$ need to be pairwise coprime, we also impose that $|H_1(Y)|=1$; moreover, by assumption the identity $d_3(\xi_{\text{can}})=0$ should also hold.
     
We find only three Brieskorn spheres of this kind, see Figure \ref{Family1}; the proof involves some elementary number theory, and we postpone it to Proposition \ref{prop:number_theory}. Using the full path algorithm in \cite{OSz-fullpath}, we find the characteristic vectors \[V_1=\setcounter{MaxMatrixCols}{30}\begin{pmatrix} 0 & 0 & -1 & 0 & 0 & 0 & 0 & 0 & 0 & 0 & 0 & 2\end{pmatrix}\in H^2(X_{\Sigma(2,7,11)};\Z)\:,\] \[V_2=\setcounter{MaxMatrixCols}{30}\begin{pmatrix} 0 & -1 & 0 & 0 & 0 & 0 & 0 & 0 & 0 & 0 & 0 & 2\end{pmatrix}\in H^2(X_{\Sigma(3,5,7)};\Z)\] and \[V_3=\setcounter{MaxMatrixCols}{30}\begin{pmatrix} 0 & -1 & 0 & 0 & 0 & 0 & 0 & 0 & 0 & 0 & 0 & 0 & 0 & 0 & 2\end{pmatrix}\in H^2(X_{\Sigma(3,4,11)};\Z)\:,\] where $X$ is the negative-definite Stein domain given by $\Gamma$. The full path of the vectors $V_1, V_2$ and $V_3$ ends correctly, and it can be checked computationally that it realizes the maximal Maslov grading; therefore, by \cite[Theorem 1.2]{OSz-fullpath} one obtain that the corresponding correction terms are equal to $M(V_1)=M(V_2)=M(V_3)=2$. 

\begin{figure}[t] 
     \begin{tikzpicture}[scale=0.7]
    \tkzDefPoints{0/0/A, 1.5/1/B, 1.5/-1/D, 3/-1/E, 4.5/-1/F, 1.5/0/C, 3/1/H, 4.5/1/I, 6/1/G}  
    \tkzDefPoints{6/-1/J, 7.5/-1/K, 9/-1/L} 
    \tkzDrawSegment(A,B)\tkzDrawSegment(A,D)\tkzDrawSegment(E,D)\tkzDrawSegment(B,A)\tkzDrawSegment(A,C)\tkzDrawSegment(B,H)\tkzDrawSegment(I,H)
    \tkzDrawSegment(I,G)\tkzDrawSegment(E,F)\tkzDrawSegment(J,F)\tkzDrawSegment(J,K)\tkzDrawSegment(K,L)
    \tkzDrawPoints[fill,black,size=5](A,B,C,D,E,F,G,H,I,J,K,L)
     \tkzLabelPoint[above left](A){$-2$} 
     \tkzLabelPoint[above right](B){$-2$}\tkzLabelPoint[above right](H){$-3$}\tkzLabelPoint[above right](I){$-2$}\tkzLabelPoint[right](C){$-2$}\tkzLabelPoint[above right](G){$-2$}
     \tkzLabelPoint[below right](D){$-2$}\tkzLabelPoint[below right](E){$-2$}\tkzLabelPoint[below right](F){$-2$}\tkzLabelPoint[below right](J){$-2$}\tkzLabelPoint[below right](K){$-2$}\tkzLabelPoint[below right](L){$-2$}
       \end{tikzpicture}\hspace{1cm} 
       \begin{tikzpicture}[scale=0.7]
    \tkzDefPoints{0/0/A, 1.5/1/B, 1.5/-1/D, 3/-1/E, 4.5/-1/F, 1.5/0/C, 3/0/H, 4.5/0/I, 6/0/G}  
    \tkzDefPoints{6/-1/J, 7.5/-1/K, 9/-1/L} 
    \tkzDrawSegment(A,B)\tkzDrawSegment(A,D)\tkzDrawSegment(E,D)\tkzDrawSegment(B,A)\tkzDrawSegment(A,C)\tkzDrawSegment(C,H)\tkzDrawSegment(I,H)
    \tkzDrawSegment(I,G)\tkzDrawSegment(E,F)\tkzDrawSegment(J,F)\tkzDrawSegment(J,K)\tkzDrawSegment(K,L)
    \tkzDrawPoints[fill,black,size=5](A,B,C,D,E,F,G,H,I,J,K,L)
     \tkzLabelPoint[above left](A){$-2$} 
     \tkzLabelPoint[above right](B){$-3$}\tkzLabelPoint[above right](H){$-2$}\tkzLabelPoint[above right](I){$-2$}\tkzLabelPoint[above right](C){$-2$}\tkzLabelPoint[above right](G){$-2$}
     \tkzLabelPoint[below left](D){$-2$}\tkzLabelPoint[below left](E){$-2$}\tkzLabelPoint[below left](F){$-2$}\tkzLabelPoint[below left](J){$-2$}\tkzLabelPoint[below left](K){$-2$}\tkzLabelPoint[below left](L){$-2$}
       \end{tikzpicture}\hspace{1cm}
     \begin{tikzpicture}[scale=0.7]
    \tkzDefPoints{0/0/A, 1.5/1/B, 1.5/-1/D, 3/-1/E, 4.5/-1/F, 1.5/0/C, 3/0/H, 4.5/0/I, 10.5/-1/G}  
    \tkzDefPoints{6/-1/J, 7.5/-1/K, 9/-1/L, 12/-1/M, 13.5/-1/N, 15/-1/O} 
    \tkzDrawSegment(A,B)\tkzDrawSegment(A,D)\tkzDrawSegment(E,D)\tkzDrawSegment(B,A)\tkzDrawSegment(A,C)\tkzDrawSegment(C,H)\tkzDrawSegment(I,H)
    \tkzDrawSegment(L,G)\tkzDrawSegment(E,F)\tkzDrawSegment(J,F)\tkzDrawSegment(J,K)\tkzDrawSegment(K,L)\tkzDrawSegment(M,L)\tkzDrawSegment(M,N)\tkzDrawSegment(N,O)
    \tkzDrawPoints[fill,black,size=5](A,B,C,D,E,F,G,H,I,J,K,L,M,N,O)
     \tkzLabelPoint[above left](A){$-2$} 
     \tkzLabelPoint[above left](B){$-3$}\tkzLabelPoint[above right](H){$-2$}\tkzLabelPoint[above right](I){$-2$}\tkzLabelPoint[above right](C){$-2$}
     \tkzLabelPoint[below left](G){$-2$}\tkzLabelPoint[below left](M){$-2$}\tkzLabelPoint[below left](O){$-2$}\tkzLabelPoint[below left](D){$-2$}\tkzLabelPoint[below left](E){$-2$}\tkzLabelPoint[below left](F){$-2$}\tkzLabelPoint[below left](J){$-2$}\tkzLabelPoint[below left](K){$-2$}\tkzLabelPoint[below left](N){$-2$}\tkzLabelPoint[below left](L){$-2$}
       \end{tikzpicture} 
     \caption{\smaller[1]{The standard graph of $\Sigma(2,7,11)$ (left), $\Sigma(3,5,7)$ (right) and $\Sigma(3,4,11)$ (bottom). All of these Brieskorn spheres have positive correction term. Note that by reversing the orientation we obtain the Seifert fibred spaces $M(-1;\frac{4}{11},\frac{1}{2},\frac{1}{7}),$ $M(-1;\frac{2}{3},\frac{1}{5},\frac{1}{7})$ and $M(-1;\frac{2}{3},\frac{1}{4},\frac{1}{11})$. }}
     \label{Family1}
\end{figure} 

\subsection{The case when \texorpdfstring{$\mathbf{d_3}$}{d3} is non-vanishing}
We use the same argument of the previous subsection. We need to find all the remaining canonically oriented Brieskorn spheres $Y$ with exactly two symplectically fillable structures and $e_0\leq-2$. We keep the same notation as above; in particular, when $e_0=-3$ we have that $-Y=M(-1;\frac{1}{a},\frac{1}{b},\frac{1}{c},\frac{1}{d})$ for $a,b,c,d\geq2$ such that $\frac{1}{a}+\frac{1}{b}+\frac{1}{c}+\frac{1}{d}=1+\frac{1}{abcd}$. Proposition \ref{prop:number_theory2} then finds us two Brieskorn spheres of this kind; namely, the manifolds $\Sigma(2,3,7,41)$ and $\Sigma(2,3,11,13)$, whose standard graphs have four legs with vertices with framing $-2$.

The $d_3$-invariant is computed easily from Equation \eqref{eq:d33}, while for the correction term we use the following characteristic vector \[V=\begin{pmatrix} -1 & 2 & 0 & \cdots \end{pmatrix}\in H^2(X_Y;\Z)\] where $Y$ is either $\Sigma(2,3,7,41)$ or $\Sigma(2,3,11,13)$ and $V$ ends with $48$ and $24$ zeros respectively. The vector $V$ ends correctly and realizes the maximal Maslov grading, where $X$ is the negative-definite Stein domain given by $\Gamma$. By \cite[Theorem 1.2]{OSz-fullpath} we obtain that $M(V)$ is equal to $12$ and $6$ respectively.

When $e_0=-2$ we recall that $Y$ has standard graph $\Gamma$ as in Figure \ref{Family2}, and that $s$ is the number of circles with framing $-2$ before the $-3$ on the first leg of $\Gamma$. We distinguish two cases:
\begin{itemize}[leftmargin=0.5cm]
    \item $s=0$ \\
      We find five Brieskorn spheres of this kind, see Figure \ref{Family4}; the number theory part of the proof is again postponed to Proposition \ref{prop:number_theory}. As in the previous subsection, the full path algorithm gives us the characteristic vectors \[V_1=\setcounter{MaxMatrixCols}{30}\begin{pmatrix} 0 & -1 & 2 & 0 & 0 & 0 & 0 & 0 & 0 & 0 & 0 & 0 & 0 & 0 & 0\end{pmatrix}\in H^2(X_{\Sigma(3,7,10)};\Z)\:,\] \[V_2=\setcounter{MaxMatrixCols}{30}\begin{pmatrix} 0 & -1 & 0 & 0 & 0 & 0 & 0 & 0 & 0 & 0 & 0 & 2\end{pmatrix}\in H^2(X_{\Sigma(4,5,9)};\Z)\:,\] \[V_3=\setcounter{MaxMatrixCols}{30}\begin{pmatrix} 0 & -1 & 0 & 2 & 0 & 0 & 0 & 0 & 0 & 0 & 0 & 0 & 0 & 0 & 0\end{pmatrix}\in H^2(X_{\Sigma(3,8,11)};\Z)\:,\] \[V_4=\setcounter{MaxMatrixCols}{30}\begin{pmatrix} 0 & -1 & 0 & 0 & 0 & 2 & 0 & 0 & 0 & 0 & 0 & 0 & 0 & 0 & 0 & 0 & 0 & 0\end{pmatrix}\in H^2(X_{\Sigma(3,7,19)};\Z)\] and \[V_5=\setcounter{MaxMatrixCols}{30}\begin{pmatrix} 0 & -1 & 2 & 0 & 0 & 0 & 0 & 0 & 0 & 0 & 0 & 0 & 0 & 0 & 0 & 0 & 0 & 0\end{pmatrix}\in H^2(X_{\Sigma(3,5,14)};\Z)\] which end correctly and realize the maximal Maslov grading. By \cite[Theorem 1.2]{OSz-fullpath} we obtain that the corresponding correction terms are equal to $M(V_i)=2$ for $i=1,...,4$ and $M(V_5)=4$. The $d_3$-invariant is computed from Equation \eqref{eq:d3}.   \\ 
    
 \item $s>0$ \\
  Proposition \ref{prop:number_theory} this time finds us infinitely many Brieskorn spheres of this kind: the family $\Sigma(2,3,6k-1)$ with $k\geq2$ and $\Sigma(2,5,9)$, see Figure \ref{Family5}. The vectors of the full path that gives the correction term are \[V_1=\setcounter{MaxMatrixCols}{30}\begin{pmatrix} 0 & 0 & -1 & 0 & 0 & 0 & 0 & 0 & 0 & 0 & 0 & 0\end{pmatrix}\in H^2(X_{\Sigma(2,5,9)};\Z)\] and \[V_{2,k}=\setcounter{MaxMatrixCols}{30}\begin{pmatrix} 0 & 0 & 0 & 0 & 0 & -1 & 0 & \cdots & 0\end{pmatrix}\in H^2(X_{\Sigma(2,3,6k-1)};\Z)\] providing $d(Y)=M(V_1)=M(V_{2,k})=2$ for $k\geq2$.
\end{itemize}

\begin{figure}[t] 
 \begin{tikzpicture}[scale=0.75]
    \tkzDefPoints{0/0/A, 1.5/1/B, 1.5/-1/D, 3/-1/E, 4.5/-1/F, 1.5/0/C, 3/0/H, 10.5/-1/I, 12/-1/G, 3/1/N, 4.5/1/O}  
    \tkzDefPoints{6/-1/J, 7.5/-1/K, 9/-1/L, 13.5/-1/M} 
    \tkzDrawSegment(A,B)\tkzDrawSegment(A,D)\tkzDrawSegment(E,D)\tkzDrawSegment(B,A)\tkzDrawSegment(A,C)\tkzDrawSegment(C,H)\tkzDrawSegment(I,L)
    \tkzDrawSegment(I,G)\tkzDrawSegment(E,F)\tkzDrawSegment(J,F)\tkzDrawSegment(J,K)\tkzDrawSegment(K,L)\tkzDrawSegment(M,L)\tkzDrawSegment(B,N)\tkzDrawSegment(N,O)
    \tkzDrawPoints[fill,black,size=5](A,B,C,D,E,F,G,H,I,J,K,L,M,N,O)
     \tkzLabelPoint[above left](A){$-2$} 
     \tkzLabelPoint[above right](B){$-3$}\tkzLabelPoint[above right](H){$-2$}\tkzLabelPoint[below left](I){$-2$}\tkzLabelPoint[above right](C){$-2$}\tkzLabelPoint[below left](G){$-2$}
     \tkzLabelPoint[below left](D){$-2$}\tkzLabelPoint[below left](E){$-2$}\tkzLabelPoint[below left](F){$-2$}\tkzLabelPoint[below left](J){$-2$}\tkzLabelPoint[below left](K){$-2$}\tkzLabelPoint[below left](L){$-2$}\tkzLabelPoint[below left](M){$-2$}\tkzLabelPoint[above right](N){$-2$}\tkzLabelPoint[above right](O){$-2$}
       \end{tikzpicture}\hspace{1cm}
 \begin{tikzpicture}[scale=0.7]
    \tkzDefPoints{0/0/A, 1.5/1/B, 1.5/-1/D, 3/-1/E, 4.5/-1/F, 1.5/0/C, 3/1/H, 4.5/1/I, 6/1/G}  
    \tkzDefPoints{6/-1/J, 3/0/K, 4.5/0/L} 
    \tkzDrawSegment(A,B)\tkzDrawSegment(A,D)\tkzDrawSegment(E,D)\tkzDrawSegment(B,A)\tkzDrawSegment(A,C)\tkzDrawSegment(B,H)\tkzDrawSegment(I,H)
    \tkzDrawSegment(I,G)\tkzDrawSegment(E,F)\tkzDrawSegment(J,F)\tkzDrawSegment(C,K)\tkzDrawSegment(K,L)
    \tkzDrawPoints[fill,black,size=5](A,B,C,D,E,F,G,H,I,J,K,L)
     \tkzLabelPoint[above left](A){$-2$} 
     \tkzLabelPoint[above right](B){$-3$}\tkzLabelPoint[above right](H){$-2$}\tkzLabelPoint[above right](I){$-2$}\tkzLabelPoint[above right](C){$-2$}\tkzLabelPoint[above right](G){$-2$}
     \tkzLabelPoint[below right](D){$-2$}\tkzLabelPoint[below right](E){$-2$}\tkzLabelPoint[below right](F){$-2$}\tkzLabelPoint[below right](J){$-2$}\tkzLabelPoint[above right](K){$-2$}\tkzLabelPoint[above right](L){$-2$}
       \end{tikzpicture}\hspace{1cm}\vspace{0.5cm}
     \begin{tikzpicture}[scale=0.7]
    \tkzDefPoints{0/0/A, 1.5/1/B, 1.5/-1/D, 3/-1/E, 4.5/-1/F, 1.5/0/C, 3/1/H, 4.5/1/I, 6/1/G, 7.5/1/M, 3/0/N, 10.5/-1/O}  
    \tkzDefPoints{6/-1/J, 7.5/-1/K, 9/-1/L} 
    \tkzDrawSegment(A,B)\tkzDrawSegment(A,D)\tkzDrawSegment(E,D)\tkzDrawSegment(B,A)\tkzDrawSegment(A,C)\tkzDrawSegment(B,H)\tkzDrawSegment(I,H)
    \tkzDrawSegment(I,G)\tkzDrawSegment(E,F)\tkzDrawSegment(J,F)\tkzDrawSegment(J,K)\tkzDrawSegment(K,L)\tkzDrawSegment(G,M)\tkzDrawSegment(C,N)\tkzDrawSegment(O,L)
    \tkzDrawPoints[fill,black,size=5](A,B,C,D,E,F,G,H,I,J,K,L,M,N,O)
     \tkzLabelPoint[above left](A){$-2$} 
     \tkzLabelPoint[above right](B){$-3$}\tkzLabelPoint[above right](H){$-2$}\tkzLabelPoint[above right](I){$-2$}\tkzLabelPoint[above right](C){$-2$}\tkzLabelPoint[above right](G){$-2$}
     \tkzLabelPoint[below right](D){$-2$}\tkzLabelPoint[below right](E){$-2$}\tkzLabelPoint[below right](F){$-2$}\tkzLabelPoint[below right](J){$-2$}\tkzLabelPoint[below right](K){$-2$}\tkzLabelPoint[below right](L){$-2$}\tkzLabelPoint[above right](M){$-2$}\tkzLabelPoint[above right](N){$-2$}\tkzLabelPoint[below right](O){$-2$}
       \end{tikzpicture}\hspace{1cm}\vspace{0.5cm}
       \begin{tikzpicture}[scale=0.75]
    \tkzDefPoints{0/0/A, 1.5/1/B, 1.5/-1/D, 3/-1/E, 4.5/-1/F, 1.5/0/C, 3/0/H, 3/1/I, 4.5/1/G, 6/1/M, 7.5/1/N, 9/1/O, 10.5/1/P, 12/1/Q, 13.5/1/R}  
    \tkzDefPoints{6/-1/J, 7.5/-1/K, 9/-1/L} 
    \tkzDrawSegment(A,B)\tkzDrawSegment(A,D)\tkzDrawSegment(E,D)\tkzDrawSegment(B,A)\tkzDrawSegment(A,C)\tkzDrawSegment(C,H)\tkzDrawSegment(I,G)
    \tkzDrawSegment(I,B)\tkzDrawSegment(E,F)\tkzDrawSegment(J,F)\tkzDrawSegment(J,K)\tkzDrawSegment(K,L)\tkzDrawSegment(G,M)\tkzDrawSegment(M,N)\tkzDrawSegment(N,O)\tkzDrawSegment(O,P)\tkzDrawSegment(P,Q)\tkzDrawSegment(Q,R)
    \tkzDrawPoints[fill,black,size=5](A,B,C,D,E,F,G,H,I,J,K,L,M,N,O,P,Q,R)
     \tkzLabelPoint[above left](A){$-2$} 
     \tkzLabelPoint[above right](B){$-3$}\tkzLabelPoint[above right](H){$-2$}\tkzLabelPoint[above right](I){$-2$}\tkzLabelPoint[above right](C){$-2$}\tkzLabelPoint[above right](G){$-2$}
     \tkzLabelPoint[below left](D){$-2$}\tkzLabelPoint[below left](E){$-2$}\tkzLabelPoint[below left](F){$-2$}\tkzLabelPoint[below left](J){$-2$}\tkzLabelPoint[below left](K){$-2$}\tkzLabelPoint[below left](L){$-2$}\tkzLabelPoint[above right](M){$-2$}\tkzLabelPoint[above right](N){$-2$}\tkzLabelPoint[above right](O){$-2$}\tkzLabelPoint[above right](P){$-2$}\tkzLabelPoint[above right](Q){$-2$}\tkzLabelPoint[above right](R){$-2$}
       \end{tikzpicture}\hspace{1cm}
     \begin{tikzpicture}[scale=0.75]
    \tkzDefPoints{0/0/A, 1.5/1/B, 1.5/-1/D, 3/-1/E, 4.5/-1/F, 1.5/0/C, 3/0/H, 3/1/I, 10.5/-1/G, 16.5/-1/P, 18/-1/Q, 19.5/-1/R}  
    \tkzDefPoints{6/-1/J, 7.5/-1/K, 9/-1/L, 12/-1/M, 13.5/-1/N, 15/-1/O} 
    \tkzDrawSegment(A,B)\tkzDrawSegment(A,D)\tkzDrawSegment(E,D)\tkzDrawSegment(B,A)\tkzDrawSegment(A,C)\tkzDrawSegment(C,H)\tkzDrawSegment(I,B)
    \tkzDrawSegment(L,G)\tkzDrawSegment(E,F)\tkzDrawSegment(J,F)\tkzDrawSegment(J,K)\tkzDrawSegment(K,L)\tkzDrawSegment(M,L)\tkzDrawSegment(M,N)\tkzDrawSegment(N,O)\tkzDrawSegment(P,O)\tkzDrawSegment(P,Q)\tkzDrawSegment(Q,R)
    \tkzDrawPoints[fill,black,size=5](A,B,C,D,E,F,G,H,I,J,K,L,M,N,O,P,Q,R)
     \tkzLabelPoint[above left](A){$-2$} 
     \tkzLabelPoint[above right](B){$-3$}\tkzLabelPoint[above right](H){$-2$}\tkzLabelPoint[above right](I){$-2$}\tkzLabelPoint[above right](C){$-2$}
     \tkzLabelPoint[below left](G){$-2$}\tkzLabelPoint[below left](M){$-2$}\tkzLabelPoint[below left](O){$-2$}\tkzLabelPoint[below left](D){$-2$}\tkzLabelPoint[below left](E){$-2$}\tkzLabelPoint[below left](F){$-2$}\tkzLabelPoint[below left](J){$-2$}\tkzLabelPoint[below left](K){$-2$}\tkzLabelPoint[below left](N){$-2$}\tkzLabelPoint[below left](L){$-2$}\tkzLabelPoint[below left](P){$-2$}\tkzLabelPoint[below left](Q){$-2$}\tkzLabelPoint[below left](R){$-2$}
       \end{tikzpicture} 
     \caption{\smaller[1]{The standard graph of $\Sigma(3,7,10)$ (top), $\Sigma(4,5,9)$ (left), $\Sigma(3,8,11)$ (right), $\Sigma(3,7,19)$ (middle) and $\Sigma(3,5,14)$ (bottom). Note that by reversing the orientation we obtain the Seifert fibred spaces $M(-1;\frac{4}{7},\frac{1}{3},\frac{1}{10})$, $M(-1;\frac{5}{9},\frac{1}{4},\frac{1}{5}),$ $M(-1;\frac{6}{11},\frac{1}{3},\frac{1}{8})$, $M(-1;\frac{10}{19},\frac{1}{3},\frac{1}{7})$ and $M(-1;\frac{3}{5},\frac{1}{3},\frac{1}{14})$.}}
     \label{Family4}
\end{figure}

\medskip

We can now prove Theorem \ref{teo:two}.

\begin{proof}[Proof of Theorem \ref{teo:two}]
 Suppose that $Y$ is a canonically oriented Brieskorn sphere. When the $d_3$-invariant is vanishing the claim follows from Subsection \ref{subsection:first} and \cite[Proposition 2.2]{ACM}, while when $d_3$ is non-vanishing from the three cases discussed above. 
 
 From convex surface theory we have that $\xi_\text{can}$ and $\overline\xi_\text{can}$ are then the only tight structures on each small Brieskorn sphere in Table \ref{Table}. Since $\Sigma(2,3,7,41)$ and $\Sigma(2,3,11,13)$ have a $\Gamma$ with $4$ legs, we can take any negative-twisting structure on them, and apply Giroux torsion along the incompressible torus; the resulting structure is tight with vanishing contact invariant, and then non-fillable because of standard Heegaard Floer results.

 We now consider an oppositely oriented Brieskorn sphere $-Y$. We proved in \cite[Corollary 1.8 and Section 8]{CM} that if $-Y$ is not $-\Sigma(2,3,6k\pm1)$ for any $k\geq1$ then it always carries a symplectically but not Stein fillable structure which is tangent to the fibers, and combining this fact with \cite[Remark 1.14]{CM} we obtain that $-Y$ necessarily has at least three fillable structures, up to isotopy, in this case. When $-Y=-\Sigma(2,3,6k\pm1)$ for some $k\geq1$ then the tight structures have been classified in \cite{EH,GvHM,Tosun} and their number is $\frac{k(k\pm1)}{2}$, which is never equal to two.
\end{proof}

\begin{figure}[t] 
 \begin{tikzpicture}[scale=0.7]
    \tkzDefPoints{0/0/A, 1.5/1/B, 1.5/-1/D, 3/-1/E, 4.5/-1/F, 1.5/0/C, 3/1/H, 10.5/-1/I, 12/-1/G}  
    \tkzDefPoints{6/-1/J, 7.5/-1/K, 9/-1/L} 
    \tkzDrawSegment(A,B)\tkzDrawSegment(A,D)\tkzDrawSegment(E,D)\tkzDrawSegment(B,A)\tkzDrawSegment(A,C)\tkzDrawSegment(B,H)\tkzDrawSegment(I,L)
    \tkzDrawSegment(I,G)\tkzDrawSegment(E,F)\tkzDrawSegment(J,F)\tkzDrawSegment(J,K)\tkzDrawSegment(K,L)
    \tkzDrawPoints[fill,black,size=5](A,B,C,D,E,F,G,H,I,J,K,L)
     \tkzLabelPoint[above left](A){$-2$} 
     \tkzLabelPoint[above right](B){$-2$}\tkzLabelPoint[above right](H){$-3$}\tkzLabelPoint[below right](I){$-2$}\tkzLabelPoint[right](C){$-2$}\tkzLabelPoint[below right](G){$-2$}
     \tkzLabelPoint[below right](D){$-2$}\tkzLabelPoint[below right](E){$-2$}\tkzLabelPoint[below right](F){$-2$}\tkzLabelPoint[below right](J){$-2$}\tkzLabelPoint[below right](K){$-2$}\tkzLabelPoint[below right](L){$-2$}
       \end{tikzpicture}\vspace{1cm} 
     \begin{tikzpicture}[scale=0.7]
    \tkzDefPoints{0/0/A, 1.5/0/B, 1.5/1/D, 3/1/E, 4.5/1/F, 1.5/-1/C, 3/-1/N} 
    \tkzDefPoints{6/1/J, 7.5/1/K, 9/1/L, 11/1/Q} 
    \tkzDefPoints{9.5/1/O}\tkzDefPoints{9.9/1/X, 10/1/Y, 10.1/1/Z}\tkzDefPoints{10.5/1/P}
    \tkzDrawPoints[fill,black,size=1](X,Y,Z)
    \tkzDrawSegment(A,B)\tkzDrawSegment(A,D)\tkzDrawSegment(E,D)\tkzDrawSegment(B,A)\tkzDrawSegment(A,C)
    \tkzDrawSegment(E,F)\tkzDrawSegment(J,F)\tkzDrawSegment(J,K)\tkzDrawSegment(K,L)\tkzDrawSegment(C,N)\tkzDrawSegment(O,L)\tkzDrawSegment(P,Q)
    \tkzDrawPoints[fill,black,size=5](A,B,C,D,E,F,J,K,L,N,Q)
     \tkzLabelPoint[above left](A){$-2$} 
     \tkzLabelPoint[right](B){$-2$}\tkzLabelPoint[below left](C){$-2$}
     \tkzLabelPoint[above right](D){$-2$}\tkzLabelPoint[above right](E){$-2$}\tkzLabelPoint[above right](F){$-2$}\tkzLabelPoint[above right](J){$-2$}\tkzLabelPoint[above right](K){$-3$}\tkzLabelPoint[above right](L){$-2$}\tkzLabelPoint[below left](N){$-2$}\tkzLabelPoint[above right](Q){$-2$}
       \end{tikzpicture}
 \caption{\smaller[1]{The standard graph of $\Sigma(2,5,9)$ (top) and $\Sigma(2,3,6k-1)$ with $k\geq2$ (bottom). There are $k-2$ vertices with framing $-2$ after the one with $-3$ on the first leg of the graph of $\Sigma(2,3,6k-1)$. Note that by reversing the orientation we obtain the Seifert fibered spaces $M(-1;\frac{2}{5},\frac{1}{2},\frac{1}{9})$ and $M(-1;\frac{k}{6k-1},\frac{1}{2},\frac{1}{3})$.}}
     \label{Family5}
\end{figure}

We now show that the Brieskorn spheres in Theorem \ref{teo:two}, together with $\Sigma(2,3,5),$ $-\Sigma(2,3,7)$ and $-\Sigma(2,3,11)$, are the only ones with a unique symplectically fillable contact structure up to contactomorphism.

\begin{proof}[Proof of Proposition \ref{prop:contactomorphism}]
  It follows from \cite[Theorem 7.6]{DHM} that every non-trivial diffeomorphism of a Brieskorn sphere is isotopic to the covering involution, and the latter one acts as conjugation on contact structures. Therefore, for every Brieskorn sphere if two fillable structures are contactomorphic but not isotopic then they are the conjugate one of the other by \cite[Theorem 1.1]{CM-negative}; the claim then follows from Theorem \ref{teo:two}. The fact that $\Sigma(2,3,5)$ carries a unique tight structure is well-known, and follows from the techniques in \cite{EH}, while that the same holds for $-\Sigma(2,3,7)$ and $-\Sigma(2,3,11)$ from \cite{GvHM,Tosun}.
\end{proof}

\section{Some details of number theory} 
We finish the proof that the Brieskorn spheres in Figures \ref{Family1}, \ref{Family4} and \ref{Family5} are the only ones which admit exactly two symplectically fillable contact structures and have $e_0\leq-2$. This relies on the following elementary number theory facts\footnote{The results in this section have been obtained with the aid of OpenAI’s ChatGPT, GPT-5.2.}.

\begin{prop}
 \label{prop:number_theory2}   
 Suppose that the integral quadruple $(a,b,c,d)$ with $d>c>b>a\geq2$ satisfies the following properties:
 \begin{enumerate}
     \item $a,b,c,d$ are pairwise coprime;
     \item $\dfrac{1}{a}+\dfrac{1}{b}+\dfrac{1}{c}+\dfrac{1}{d}=1+\dfrac{1}{abcd}$.
 \end{enumerate}
 Then the only possibilities are $(2,3,7,41)$ and $(2,3,11,13)$.
\end{prop}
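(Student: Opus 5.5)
Multiplying condition (2) by $abcd$ turns it into
\[bcd+acd+abd+abc=abcd+1 .\]
I plan to solve this by bounding the variables one at a time, as in the classical Egyptian-fraction and Znám-problem arguments. Pairwise coprimality will be used at the end to discard spurious solutions, such as tuples with repeated or non-coprime entries.

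\emph{Bounding $a$, $b$, $c$.} Since $a<b<c<d$ and the right-hand side exceeds $1$, we have $4/a>1$, so $a\in\{2,3\}$.

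If $a=3$, coprimality forces $b,c,d\geq 4,5,7$. Then
\[\tfrac13+\tfrac14+\tfrac15+\tfrac17<1,\]
which is a contradiction, so $a=2$.

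Next, $\frac1b+\frac1c+\frac1d>\frac12$ with $b$ odd and $b\geq3$. If $b\geq5$, then $c\geq7$ and $d\geq9$ (odd and coprime), and $\frac15+\frac17+\frac19<\frac12$. Hence $b=3$.

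Now $\frac1c+\frac1d=\frac16+\frac{1}{6cd}$, with $c,d$ coprime to $6$ and $c\geq5$. Since $2/c>1/6$, we get $c<12$, so $c\in\{5,7,11\}$.

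\emph{Solving for $d$.} With $a,b,c$ fixed the equation is linear in $d$. Clearing denominators gives
\[6d+6c=cd+1,\qquad\text{i.e.}\qquad (c-6)(d-6)=35 .\]
\begin{itemize}[leftmargin=0.5cm]
\item $c=5$: the left side is negative, so there is no solution.
\item $c=7$: $d=41$.
\item $c=11$: $d=13$.
\end{itemize}
Both tuples are pairwise coprime and satisfy $c<d$, which gives exactly $(2,3,7,41)$ and $(2,3,11,13)$.

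\emph{Where care is needed.} The main step requiring attention is the pruning of small cases using coprimality (the $a=3$ and $b\geq5$ bounds). There one must choose the minimal admissible coprime values carefully so that the strict inequalities really fail. The factorization $(c-6)(d-6)=35$ then makes the last step immediate.
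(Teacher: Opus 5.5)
Your proof is correct and follows essentially the same route as the paper. Coprimality sharpens the Egyptian-fraction bounds $\frac13+\frac14+\frac15+\frac17<1$ and $\frac15+\frac17+\frac19<\frac12$, forcing $a=2$ and $b=3$, and the factorization $(c-6)(d-6)=35$ then finishes. Your extra step bounding $c<12$ (so $c\in\{5,7,11\}$) only makes explicit the divisor check that the paper leaves implicit when it reads the solutions off from $35$.
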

\begin{proof}
 If $a>2$ then by Property 1) one has \[\dfrac{1}{a}+\dfrac{1}{b}+\dfrac{1}{c}+\dfrac{1}{d}\leq\dfrac{1}{3}+\dfrac{1}{4}+\dfrac{1}{5}+\dfrac{1}{7}<1\:;\] hence, we have that $a=2$ from Property 2). In the same way if $b>3$ then \[\dfrac{1}{2}+\dfrac{1}{b}+\dfrac{1}{c}+\dfrac{1}{d}\leq\dfrac{1}{2}+\dfrac{1}{5}+\dfrac{1}{7}+\dfrac{1}{9}<1\] which means $b=3$.   
 Plugging these values in Property 2) we obtain \[(c-6)(d-6)=(cd-6c-6d)+36=-1+36=35\] whose only positive solutions of the form $(c,d)$ are $(7,41)$ and $(11,13)$.
\end{proof}

\begin{prop}
 \label{prop:new}   
 There is no integral quadruple $(a,b,c,d)$ with $c>b>a\geq2$ and $d\geq3$ satisfying \[\dfrac{1}{a}+\dfrac{1}{b}+\dfrac{1}{c}+\dfrac{d+1}{2d}=2+\dfrac{1}{abcd}\:.\]
\end{prop}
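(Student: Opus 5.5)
The plan is to show that the left-hand side is always too small, by bounding the two groups of terms separately with no arithmetic input at all. Coprimality is not even part of the hypotheses here, so only the size of the terms can be used.

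First I rewrite the last summand as
\[\dfrac{d+1}{2d}=\dfrac{1}{2}+\dfrac{1}{2d}\:.\]
Since $d\geq3$, this gives $\frac{d+1}{2d}\leq\frac{1}{2}+\frac{1}{6}=\frac{2}{3}$. The equation in the statement can then be rearranged as
\[\dfrac{1}{a}+\dfrac{1}{b}+\dfrac{1}{c}=\dfrac{3}{2}-\dfrac{1}{2d}+\dfrac{1}{abcd}\:.\]
Because $\frac{1}{abcd}>0$ and $\frac{1}{2d}\leq\frac{1}{6}$, the right-hand side is strictly larger than $\frac{3}{2}-\frac{1}{6}=\frac{4}{3}$.

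On the other hand, $a,b,c$ are distinct integers with $c>b>a\geq2$, so $a\geq2$, $b\geq3$ and $c\geq4$. Hence
\[\dfrac{1}{a}+\dfrac{1}{b}+\dfrac{1}{c}\leq\dfrac{1}{2}+\dfrac{1}{3}+\dfrac{1}{4}=\dfrac{13}{12}<\dfrac{4}{3}\:.\]
This contradicts the previous bound, so no quadruple $(a,b,c,d)$ as in the statement exists.

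There is essentially no obstacle. The only points to check are that the inequality $\frac{1}{2d}\leq\frac16$ goes the right way once it is moved to the other side, and that the positive error term $\frac{1}{abcd}$ only strengthens the contradiction. The same style of size estimate as in the proof of Proposition \ref{prop:number_theory2} applies, and here it closes the argument without any case analysis.
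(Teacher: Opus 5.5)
Your proof is correct, and it is a shortened version of the paper's argument.

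The paper clears denominators to get $d(3abc-2ab-2ac-2bc)=abc-2$. It then uses $d\geq3$ to deduce $\frac1a+\frac1b+\frac1c>\frac43$. This is the same inequality you obtain directly from $\frac{d+1}{2d}\leq\frac23$, and your route avoids having to check that $3abc-2ab-2ac-2bc$ is positive.

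The paper, however, only uses this inequality to conclude $a=2$. It then runs a second estimate on $2d(bc-b-c)=bc-1$ to show $d<3$, using $c>b\geq3$.

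Your observation that $\frac12+\frac13+\frac14=\frac{13}{12}<\frac43$ shows this second step is redundant. The hypothesis $c>b>a\geq2$ already forces $b\geq3$ and $c\geq4$, so the case $a=2$ is excluded by the same bound. Both proofs rest on the same size estimate; yours stops as soon as the contradiction appears and needs no case analysis.
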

\begin{proof}
 We rearrange the equation as $d(3abc-2ab-2ac-2bc)=abc-2$. Since $d\geq3$ we have $8abc+2\leq6ab+6ac+6bc$, which after dividing by $2abc$ yields $\frac{1}{a}+\frac{1}{b}+\frac{1}{c}>\frac{4}{3}$. Clearly, we cannot have $a\geq3$, thus $a=2$ and then $2d(bc-b-c)=bc-1$. We now show that this implies $d\leq2$ which is a contradiction: we have that $d<3$ is equivalent to $5bc-6b-6c+1>0$, this is obviously satisfied for $c>b\geq3$ because \[5bc-6b-6c+1>15c-12c+1>10\:.\]
\end{proof}

\begin{prop}
 \label{prop:number_theory}
 Suppose that the integral triple $(a,b,uv-1)$  with $b>a\geq2$ and $u,v\geq2$ satisfies the following properties:
 \begin{enumerate}
     \item $a,b$ and $uv-1$ are pairwise coprime;
     \item $\dfrac{1}{a}+\dfrac{1}{b}+\dfrac{v}{uv-1}>1$;
     \item $vab-1=(uv-1)X$ where $X:=ab-a-b$;
     \item $a+b+u+v=4+(v-1)[ab-(u-1)X]$.
 \end{enumerate}
 Then the only possibilities are $(5,7,3),$ $(4,11,3)$ and $(2,7,11)$. Furthermore, if we drop Property 4) then we also find $(3,14,5),$ $(3,10,7),$ $(4,5,9),$ $(3,8,11),$ $(3,7,19),$ $(2,9,5)$ and $(2,3,6v-1)$. 
\end{prop}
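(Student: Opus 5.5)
The plan is to exploit Property 3), which is linear in $u$ once $a,b,v$ are fixed. This reduces everything to a finite search plus one infinite family. Property 4) is used only at the very end, as a filter.

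First I rewrite Property 3). Put $X=ab-a-b$, which is $\geq1$ because $b>a\geq2$. Then $vab-1=(uv-1)X$ is equivalent to
\[
v\,(uX-ab)=X-1 .
\]
\begin{itemize}
\item If $X=1$, that is $(a,b)=(2,3)$, this forces $uX=ab$, so $u=6$ with $v$ free. This gives the family $(2,3,6v-1)$. Properties 1) and 2) are immediate to check: $6v-1$ is coprime to $6$, and $\frac12+\frac13>\frac16$ leaves room.
\item If $X\geq2$, then $w:=uX-ab$ is a positive integer with $X-1=vw$. Since $v\geq2$ we get $1\leq w\leq (X-1)/2$ and $u=(ab+w)/X$.
\end{itemize}

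Second, I impose $u\geq2$. This gives $ab-2a-2b=2X-ab\leq w\leq (X-1)/2$, which simplifies to $(a-3)(b-3)\leq 8$. For $a\geq4$ this leaves finitely many pairs: $a=4$ with $b\leq11$, and $a=5$ with $b\leq7$. The two infinite cases $a=2$ and $a=3$ need a separate treatment.

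For $a=3$ we have $X=2b-3$, and $w$ is trapped in $[b-6,\,b-2]$. For large $b$ this forces $u=2$, since $3b+w<3(2b-3)$ once $b>4$. Then $w=b-6$ and $v=(2b-4)/(b-6)=2+8/(b-6)$, so $b-6\mid 8$. This yields $b\in\{7,8,10,14\}$, with $v=10,6,4,3$ respectively and $u=2$. These are exactly the triples $(3,7,19)$, $(3,8,11)$, $(3,10,7)$ and $(3,14,5)$.

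For $a=2$ we have $X=b-2$ and $u=2+(4+w)/(b-2)$, with $w=(b-3)/v\leq(b-3)/2$. Since $w>0$ we need $u\geq3$, hence $b-2\leq 4+w\leq 4+(b-3)/2$, so $b\leq 9$. A direct check of $b\in\{5,7,9\}$ (odd, by coprimality) leaves $(2,7,11)$ and $(2,9,5)$.

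Third, I finish the finite cases $a\in\{4,5\}$. For each pair I list the divisors $v\geq2$ of $X-1$, set $w=(X-1)/v$, and keep those for which $X\mid ab+w$. Then I discard any triple violating Property 1). This should produce $(4,5,9)$, $(4,11,3)$ and $(5,7,3)$. Property 2) still has to be verified on every survivor, but it is only a single inequality.

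Finally, I plug the full list into Property 4). I expect it to hold only for $(5,7,3)$, $(4,11,3)$ and $(2,7,11)$, and to fail for the family $(2,3,6v-1)$ for every $v$: there the right-hand side grows like $(v-1)\cdot(\text{const})$, while the left-hand side grows like $v$.

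The main obstacle is making the bound on $b$ airtight in the infinite cases $a=2,3$ without leaving gaps. The inequality $w\leq(X-1)/2$ has to be combined with integrality of $u$ in exactly the right way. The remaining casework is routine but must be exhaustive, and I would double-check it computationally.
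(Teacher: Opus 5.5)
Your proof is correct, and it takes a genuinely different route from the paper.

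\textbf{Comparison with the paper.} The paper organizes the search by $u$. It first uses Property 2) to show $u\in\{2,3,6\}$, with $u=6$ forcing $(2,3,6v-1)$. It then uses Property 2) again to bound $a$ for $u=2,3$. For each fixed $(u,a)$ it factors Property 3) as $t(v)$ times a factor in $b$, equal to a quadratic $N(v)$. It shows $t(v)$ divides the constant $a^2-a+2$ (resp.\ $a^2-2a+3$), which leaves finitely many $v$, hence finitely many $b$.

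You instead rewrite Property 3) as $v(uX-ab)=X-1$, so that $w=uX-ab=(X-1)/v$. Combining $u\geq 2$ with $v\geq 2$ gives the clean bound $(a-3)(b-3)\leq 8$. This disposes of every $a\geq 6$ at once and leaves finitely many pairs for $a=4,5$. Your separate treatments of $a=3$ (forcing $u=2$ and $b-6\mid 8$) and of $a=2$ (forcing $u\geq 3$ and $b\leq 9$) are sound. Carrying out your finite checks gives exactly $(4,5,9)$, $(4,11,3)$, $(5,7,3)$, $(2,7,11)$, $(2,9,5)$ and the four triples with $a=3$, so nothing is lost.

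A bonus of your route is that it never uses Property 2). In fact, multiplying Property 2) by $ab(uv-1)$ turns it into $vab-(uv-1)X>0$, and Property 3) makes this quantity equal to $1$. So the Property 2) check you postpone on the survivors is automatic. This also makes your garbled justification of Property 2) for the family moot; the correct inequality would be $\frac{v}{6v-1}>\frac16$.

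\textbf{Small corrections.} For $a=3$, the bound $3b+w\leq 4b-2<6b-9$ already holds for every $b\geq 4$. You should say $b\geq 4$ rather than ``once $b>4$''; otherwise $b=4$ is formally left open.

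For the family $(2,3,6v-1)$, your growth-rate heuristic is not the reason Property 4) fails. Here $ab-(u-1)X=6-5=1$, so the two sides are $v+11$ and $v+3$. They grow at the same rate and differ by $8$; state that computation instead.

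Direct substitution confirms that Property 4) holds for $(5,7,3)$, $(4,11,3)$ and $(2,7,11)$. It fails for the other six sporadic triples; for example, $(2,9,5)$ gives $16\neq 8$.
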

We split the proof into a couple of lemmas. Throughout the section, we always assume that $a,b,u,v$ are as in the statement of Proposition \ref{prop:number_theory}; moreover, we call a triple satisfying Properties 1) -- 3) an \emph{admissible triple}.
\begin{lemma}
 \label{lemma1}
 If a triple $(a,b,uv-1)$ is admissible then $u\in\{2,3,6\}$; in particular, when $u=6$ the only possibility is $(2,3,6v-1)$. 
\end{lemma}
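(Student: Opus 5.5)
The plan is to use Property 3) alone, together with the coprimality in Property 1), to pin $u$ down as a ceiling of the quantity $\frac{ab}{X}$. We do not expect Properties 2) and 4) to be needed.

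First we note that $X=ab-a-b=(a-1)(b-1)-1\geq1$, since $b>a\geq2$ forces $b\geq3$. Rewrite Property 3) as $vab-1=uvX-X$, that is,
\[X-1=v\,(uX-ab)\:.\]
Set $m:=uX-ab$. Then $m=\frac{X-1}{v}$ is a non-negative integer, and $ab=uX-\frac{X-1}{v}$. Dividing by $X$ gives
\[u-\frac{1}{v}<\frac{ab}{X}=\frac{1}{1-\frac1a-\frac1b}\leq u\:,\]
and since $v\geq2$ this yields
\[u-\tfrac12<\frac{ab}{X}\leq u\:.\]
So $u$ is forced to equal $\lceil ab/X\rceil$, and in addition $ab/X$ must lie within $\frac12$ of that ceiling.

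Next we bound $ab/X$ using $b>a\geq2$ and $\gcd(a,b)=1$. If $u\geq4$, then $\frac{ab}{X}>\frac72$, which is equivalent to $\frac1a+\frac1b>\frac57$.
\begin{itemize}
\item For $a\geq3$ we have $\frac1a+\frac1b\leq\frac13+\frac14<\frac57$, so this case is excluded.
\item For $a=2$ the condition becomes $\frac1b>\frac{3}{14}$, i.e. $b\leq4$. Coprimality then leaves only $b=3$, since $(2,4)$ is excluded. Note that $(2,4)$ would otherwise give $\frac{ab}{X}=4$.
\end{itemize}
For $(a,b)=(2,3)$ we get $X=1$, so Property 3) reads $6v-1=uv-1$, giving $u=6$. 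This produces the triple $(2,3,6v-1)$.

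In every other admissible case $\frac{ab}{X}\leq\frac72$, so $u-\frac12<\frac72$, i.e. $u\leq3$. Combined with the hypothesis $u\geq2$, this gives $u\in\{2,3\}$. In particular $u=4,5$ never occur, and $u=6$ occurs only for $(2,3,6v-1)$.

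The main obstacle is the first step, extracting the two-sided estimate $u-\frac1v<\frac{ab}{X}\leq u$ from Property 3) via the divisibility $v\mid X-1$. The rest is a finite check. The one delicate point there is remembering that coprimality is what removes $(2,4)$, which would otherwise give $u=4$.
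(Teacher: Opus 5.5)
Your proof is correct, and it takes a different route from the paper's. The paper splits on $a$. For $a\geq3$ it uses Property 2) to get $\frac{v}{uv-1}>\frac13$, hence $u\leq3$. For $a=2$ it solves Property 3) explicitly as $b=\frac{2uv-3}{v(u-2)-1}$, observes that $b<3$ once $u>6$, and then checks $u=4,5,6$ one by one.

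You instead extract from Property 3) alone the sandwich $u-\frac1v<\frac{ab}{X}\leq u$. This identifies $u=\lceil ab/X\rceil$ and reduces the lemma to the single observation that $\frac1a+\frac1b>\frac57$ only for $(a,b)\in\{(2,3),(2,4)\}$. So no case-by-case treatment of $u=4,5$ is needed.

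A useful by-product is that your left inequality is exactly Property 2): it rewrites as $\frac1a+\frac1b+\frac{v}{uv-1}>1$. Hence Property 2) is redundant given Property 3). This is consistent with $e(-Y)=\frac{vab-(uv-1)X}{(uv-1)ab}=\frac{1}{(uv-1)ab}$.

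One small correction to your closing remark: coprimality is not what removes $(2,4)$. Your own identity $\frac{ab}{X}=u-\frac{X-1}{vX}$ shows that $\frac{ab}{X}=u$ forces $X=1$. But $(2,4)$ has $X=2$ and $\frac{ab}{X}=4$, so no integer $u$ fits the sandwich. Directly, $8v-1=2(uv-1)$ fails by parity. So your argument proves the lemma from Property 3) together with $b>a\geq2$ and $u,v\geq2$ alone. The paper's route is more computational, but it does produce the explicit formula for $b$ when $a=2$.
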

\begin{proof}
 We start by showing that $2\leq u\leq6$. Suppose that $a\geq3$, so that $\frac{1}{a}+\frac{1}{b}\leq\frac{2}{3}$; from Property 2) we get $\frac{v}{uv-1}>\frac{1}{3}$, which implies $v(u-3)<1$: because $v\geq2$ this forces $u\leq3$. So if $u>3$ then we must have $a=2$. When $a=2$ Property 3) gives the explicit formula
 \begin{equation}
  b=\dfrac{2uv-3}{v(u-2)-1}\:;   
  \label{eq:spades}
 \end{equation}
 an easy manipulation of this identity shows that the condition $b<3$ is equivalent to $v(u-6)>0$, thus if $u>6$ then $2\leq b<3$ which means $b=2$. This contradicts Property 1) because $a$ and $b$ should be coprime.
 
 When $u=4,5,6$ we saw that $a=2$, thus $b\geq2$ is given by Equation \eqref{eq:spades} and if $u<6$ then we find no integral solution; on the other hand, when $u=6$, Equation \eqref{eq:spades} provides $b=3$. 
\end{proof}
We then only need to consider the cases when $u=2,3$. We do this in separate lemmas, even though the argument is the same for both. First, we prove that for each $u$ we only have to consider a finite number of cases.
\begin{lemma}
 \label{lemma4}
 Suppose that a triple $(a,b,uv-1)$ is admissible. We have that if $u=2$ then $2\leq a\leq5$, while if $u=3$ then $a=2,3$. 
\end{lemma}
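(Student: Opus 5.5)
The plan is to turn Property 3) into an inequality for $\frac{1}{a}+\frac{1}{b}$ whose right-hand side depends only on $u$ and $v$, and then to use $b>a$ to bound $a$. Only Property 3) and the hypotheses $b>a\geq 2$, $v\geq2$ should be needed. In particular, I do not expect to use coprimality or Property 2) here.

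First, I would expand the right-hand side of Property 3) with $X=ab-a-b$:
\[(uv-1)(ab-a-b)=uv\,ab-uv(a+b)-ab+(a+b)\:.\]
Equating this with $vab-1$ and collecting the $ab$ terms gives the identity
\[ab\big(v(u-1)-1\big)=(uv-1)(a+b)-1\:.\]
Since $v(u-1)-1>0$ for $u,v\geq2$, I can drop the $-1$ and divide by $ab\big(v(u-1)-1\big)$. This yields the strict inequality
\[\dfrac{1}{a}+\dfrac{1}{b}>\dfrac{v(u-1)-1}{uv-1}\:.\]

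Next, because $b>a$, the left-hand side is strictly smaller than $\frac{2}{a}$. I would then bound the right-hand side from below uniformly in $v\geq2$. The function $v\mapsto\frac{v(u-1)-1}{uv-1}$ is increasing in $v$, since cross-multiplying reduces this to $u-1>0$ after simplification. Hence its minimum over $v\geq2$ is attained at $v=2$, where it equals $\frac{2u-3}{2u-1}$.

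For $u=2$ this minimum is $\frac13$, so $\frac{2}{a}>\frac13$, which gives $a<6$, that is $2\leq a\leq5$. For $u=3$ the minimum is $\frac35$, so $\frac{2}{a}>\frac35$, which gives $a<\frac{10}{3}$, that is $a\in\{2,3\}$.

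The only step needing care is the monotonicity in $v$, together with keeping the inequality strict. Both reduce to a one-line cross-multiplication, so I do not expect any real obstacle. Lemma \ref{lemma1} already restricts attention to $u\in\{2,3\}$, so these two cases complete the proof.
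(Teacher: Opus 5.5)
Your argument is correct and follows the paper's skeleton: obtain $\frac1a+\frac1b>\frac{(u-1)v-1}{uv-1}$, replace the left side by $\frac2a$ using $b>a$, and bound uniformly in $v\geq2$. The one real difference is where the key inequality comes from. The paper reads it off Property 2) in one line, by moving $\frac{v}{uv-1}$ to the right-hand side. You extract it from Property 3) instead. Your identity is in fact equivalent to
\[\frac1a+\frac1b+\frac{v}{uv-1}-1=\frac{1}{ab(uv-1)}\:,\]
so Property 3) already implies Property 2). Your route therefore proves the lemma for any triple satisfying only Property 3) with $b>a\geq2$ and $u,v\geq2$, at the cost of a short expansion. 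The paper's route is immediate but relies on a hypothesis that turns out to be redundant.

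Your endgame is also cleaner than the paper's. Evaluating the increasing function at $v=2$ gives $a<6$ and $a<\frac{10}{3}$ directly. The paper's chains contain intermediate inequalities that fail for small $v$: $\frac{4v-2}{v-1}=6$ at $v=2$, and $\frac{3v-1}{v-1}\geq4$ for $v=2,3$. Its conclusions still hold, since $\frac{4v-2}{v-1}\leq6$ and $\frac{2(3v-1)}{2v-1}\leq\frac{10}{3}$.

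One small slip in your justification: for $f(v)=\frac{(u-1)v-1}{uv-1}$, cross-multiplying $f(v)<f(v')$ reduces to $v<v'$, not to $u-1>0$. Equivalently, $f(v)=\frac{u-1}{u}-\frac{1}{u(uv-1)}$. The monotonicity claim itself is correct.
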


\begin{proof}
 From Property 2) we have that \[\dfrac1a+\dfrac1b+\dfrac{v}{uv-1}>1\hspace{1cm}\text{ implies }\hspace{1cm}\frac1a+\frac1b>1-\frac{v}{uv-1}=\frac{(u-1)v-1}{uv-1}\:.\]
 Since $a<b$, we obtain $\frac{1}{a}+\frac{1}{b}<\frac{2}{a}$ and then $\frac{2}{a}>\frac{(u-1)v-1}{uv-1}$. If $u=2$ we write 
 \[\dfrac{2}{a}>\dfrac{v-1}{2v-1}\hspace{1cm}\text{ which means }\hspace{1cm}a<\dfrac{4v-2}{v-1}<6\:,\] thus $2\leq a\leq 5$. In addition, if $u=3$ we write 
 \[\dfrac{2}{a}>\dfrac{2v-1}{3v-1}\hspace{1cm}\text{ which means }\hspace{1cm}a<\dfrac{2(3v-1)}{2v-1}<\dfrac{3v-1}{v-1}<4\:,\] thus $2\leq a\leq 3$.
\end{proof}
An easy manipulation of Property 3) yields 
\begin{equation}\begin{aligned}
  u=2:\:\hspace{1cm}&\bigl[(v-1)a-(2v-1)\bigr]\cdot\bigl[(v-1)b-(2v-1)\bigr]=4v^2-5v+2\:;\\[1mm]
  u=3:\:\hspace{1cm}&\bigl[(2v-1)a-(3v-1)\bigr]\cdot\bigl[(2v-1)b-(3v-1)\bigr]=9v^2-8v+2\:.
  \label{eq:bidots}
 \end{aligned}
\end{equation}
For each value of $a$ and $u$ in Lemmas \ref{lemma1} and \ref{lemma4} we set $t(v)$ equal to the first factor of the left-side members of Equation \eqref{eq:bidots}, and $N(v)$ to the right-side members of Equation \eqref{eq:bidots}.

\begin{lemma}
 \label{lemma2}
 If $u=2$ then the admissible triples are exactly the ones listed below.
\end{lemma}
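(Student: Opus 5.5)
The plan is to use Lemma \ref{lemma4}, which reduces the case $u=2$ to the four values $a\in\{2,3,4,5\}$. For each such $a$ I would solve Property 3) for $b$ rather than work with the factorisation \eqref{eq:bidots}, because Property 3) is linear in $b$. With $u=2$ it reads $vab-1=(2v-1)(ab-a-b)$, and collecting the terms in $b$ gives
\[
b\,\bigl[(v-1)a-(2v-1)\bigr]=(2v-1)a-1 .
\]
So $b$ is the quotient of two polynomials in $v$ that are linear once $a$ is fixed. The denominator is $t_a(v)=(a-2)v-(a-1)$, and the argument then splits by $a$.

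For $a=2$ the denominator is $t_2(v)=-1$, so $b$ would be negative. Hence there is no admissible triple with $a=2$ and $u=2$. For $a=3,4,5$ the denominator $t_a(v)$ is positive for $v\geq2$. The condition is that $t_a(v)$ divides $(2v-1)a-1$. I would eliminate $v$: multiply the numerator by $a-2$ and subtract a suitable multiple of $t_a(v)$. This shows that $t_a(v)$ divides a constant depending only on $a$, namely
\[
C_a=(a-2)\bigl((2v-1)a-1\bigr)-2a\,t_a(v)=a^2-a+2 \quad (\text{up to sign}),
\]
so $C_3=8$, $C_4=14$ and $C_5=22$. Since $t_a(v)$ grows linearly in $v$, this leaves only finitely many $v$ for each $a$, namely those with $t_a(v)$ a divisor of $C_a$. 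For $a=3$, $t_3(v)=v-2$ must divide $8$, giving $v\in\{3,4,6,10\}$ (the value $v=2$ makes the denominator vanish). For $a=4$, $t_4(v)=2v-3$ must divide $14$, giving $v\in\{2,5\}$. For $a=5$, $t_5(v)=3v-4$ must divide $22$, giving $v=2$.

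For each surviving pair $(a,v)$ I would compute $b$ and then apply the remaining conditions: the ordering $b>a$, pairwise coprimality of $a$, $b$ and $2v-1$ (Property 1)), and the inequality in Property 2). I expect the survivors to be exactly
\[
(3,14,5),\ (3,10,7),\ (3,8,11),\ (3,7,19),\ (4,11,3),\ (4,5,9),\ (5,7,3).
\]
These are the triples with $uv-1=2v-1$ among those listed in Proposition \ref{prop:number_theory}; the remaining ones, $(2,9,5)$, $(2,7,11)$ and $(2,3,6v-1)$, come from $u=3$ or $u=6$. Property 2) should be automatic here, since it only encodes $e(-Y)>0$, but I would still check it on each case.

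The main obstacle is bookkeeping rather than ideas. The elimination constant must be computed correctly, and every divisor of $C_a$ must be accounted for, including sign and the requirement $v\geq2$. I would also check directly that each resulting triple satisfies Property 3). This direct check matters because a sign slip in the factorised form \eqref{eq:bidots} would silently discard or admit cases.
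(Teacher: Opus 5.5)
Your argument is essentially the paper's proof: your linear relation $b\,t_a(v)=(2v-1)a-1$ is the factorisation \eqref{eq:bidots} before it is symmetrised, and both reduce to the same divisibility $t_a(v)\mid a^2-a+2$ on the range $2\leq a\leq 5$ given by Lemma \ref{lemma4}. There is one bookkeeping slip: for $a=5$ the divisor $11$ of $22$ also gives $v=5$ (the paper lists $v=2,5$), but then $b=44/11=4<a$, which is the triple $(4,5,9)$ with $a$ and $b$ swapped, so the ordering discards it and your final list is unaffected.
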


\[
\begin{array}{c|c|c|c|l|l}
a & a^2-a+2 & \text{Divisors }t & v=\dfrac{t+a-1}{a-2}\ (\ge2) & b & \text{Valid triples}\\
\hline
2 & 4 & - & - & - & \text{none} \\
3 & 8 & 1,2,4,8 & 3,4,6,10 & 14,10,8,7 & (3,14,5),(3,10,7), \\
 & & & & & (3,8,11),(3,7,19)\\
4 & 14 & 1,2,7,14 & 2,5 & 11,5 & (4,11,3),(4,5,9)\\
5 & 22 & 1,2,11,22 & 2,5 & 7 & (5,7,3)
\end{array}\label{Table2}
\]
\begin{proof}
 Let $t(v)=(v-1)a-(2v-1)=v(a-2)-(a-1)$ and $N(v)=4v^2-5v+2$. For $a>2$, substituting $v=\frac{t(v)+(a-1)}{a-2}$ and simplifying denominators gives \[(a-2)^2 N(v)=4t(v)^2+(3a+2)t(v)+(a^2-a+2)\:;\] hence, Equation \eqref{eq:bidots} implies that $t(v)$ is a (positive) divisor of $N(v)$, so $t(v)\mid a^2-a+2$. For $a=2$ we see that Equation \eqref{eq:bidots} is not satisfied by any $b$ positive.

 For each value of $a$ and each $t$ positive divisor of $a^2-a+2$ then $v=\frac{t+a-1}{a-2}$ must be an integer at least equal to 2. Each such pair $(a,v)$ gives at most one $b$ from Equation \eqref{eq:bidots}.
\end{proof}

\begin{lemma}
 \label{lemma3}
 If $u=3$ then the admissible triples are exactly the ones listed below.
\end{lemma}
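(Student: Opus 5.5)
The plan is to run the same divisor argument as in Lemma \ref{lemma2}, now with the second line of Equation \eqref{eq:bidots}. By Lemma \ref{lemma4} we only need to treat $a=2$ and $a=3$. In each case we put $t(v)=(2v-1)a-(3v-1)$ and $N(v)=9v^2-8v+2$, and we rewrite $N(v)$ as a polynomial in $t(v)$. This shows that $t(v)$ divides a fixed small constant.

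First we note that $t(v)>0$. The right-hand side $N(v)$ is positive. Moreover $b>a$, so the second factor $(2v-1)b-(3v-1)$ is larger than the first. If both factors were negative, the second would be the one of smaller absolute value, and a short check rules this out (for instance it would force $b<\frac{3v-1}{2v-1}<2$). Hence both factors are positive.

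For $a=2$ we get $t(v)=v-1$. Substituting $v=t+1$ gives
\[
N=9t^2+10t+3\:,
\]
so $t\mid 3$, that is $t\in\{1,3\}$ and $v\in\{2,4\}$. The cofactor $N(v)/t$ then determines $b$ through $(2v-1)b-(3v-1)=N(v)/t$:
\begin{itemize}
\item for $v=2$ we have $N=22$ and $3b-5=22$, so $b=9$ and $uv-1=5$;
\item for $v=4$ we have $N=114$ and $7b-11=38$, so $b=7$ and $uv-1=11$.
\end{itemize}
This gives the candidates $(2,9,5)$ and $(2,7,11)$.

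For $a=3$ we get $t(v)=3v-2$. Here we multiply by $3$ to clear the denominator, which gives
\[
3N=3t^2+4t+2\:.
\]
So $t$ divides $2$. But $t=3v-2\geq4$ for $v\geq2$, so there are no solutions with $a=3$.

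Finally we verify that the two surviving triples are genuinely admissible. They are pairwise coprime. Property 2) holds, since $\frac12+\frac19+\frac25>1$ and $\frac12+\frac17+\frac4{11}>1$. Property 3) holds by construction. So the table for $u=3$ lists exactly $(2,9,5)$ and $(2,7,11)$.

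The only step needing care is the polynomial reduction for $a=3$. Because of the factor $3$ we can only conclude $t\mid 3N$, so we must check that $\gcd(t,3)$ does not spoil the conclusion $t\mid 2$. This is harmless: $t=3v-2\equiv1 \pmod 3$ is coprime to $3$. Everything else is routine arithmetic.
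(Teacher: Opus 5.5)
Your proof is correct and follows essentially the paper's route: the paper substitutes $v=\frac{t+a-1}{2a-3}$ to get $(2a-3)^2N(v)=9t^2+(2a+6)t+(a^2-2a+3)$, concludes $t\mid a^2-2a+3$, and checks integrality of $v$, which is what you do after specializing to $a=2,3$ (dividing by $3$ for $a=3$ just sharpens $t\mid 6$ to $t\mid 2$), and you usefully make explicit the positivity of $t$ and the admissibility check that the paper leaves implicit. Your closing concern about $\gcd(t,3)$ is unnecessary, since $t\mid N$ already gives $t\mid 3N=3t^2+4t+2$ and hence $t\mid 2$ directly.
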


\[
\begin{array}{c|c|c|c|l|l}
a & a^2-2a+3 & \text{Divisors }t & v=\dfrac{t+a-1}{2a-3}\ (\ge2) & b & \text{Valid triples}\\
\hline
2 & 3 & 1,3 & 2,4 & 9,7 & (2,9,5),(2,7,11)\\
3 & 6 & 1,2,3,6 & \text{no integer }\ge2 & - & \text{none}\label{Table3}
\end{array}
\]

\begin{proof}
 Let $t(v)=(2v-1)a-(3v-1)=v(2a-3)-(a-1)$ and $N(v)=9v^2-8v+2$. Substituting $v=\frac{t(v)+(a-1)}{2a-3}$ and simplifying denominators again yields \[(2a-3)^2 N(v)=9t(v)^2+(2a+6)t(v)+(a^2-2a+3)\:;\] so that $t(v)$ is a positive divisor of $a^2-2a+3$ as in Lemma \ref{lemma2}. Let $t(v)=t$ for any such divisor, then $v=\frac{t+a-1}{2a-3}$ must be an integer at least equal to 2, and we conclude as in the previous case.
\end{proof}
We can now finish the proof.
\begin{proof}[Proof of Proposition \ref{prop:number_theory}]
 By Lemma \ref{lemma1} we restrict the values of $u$ to $2,3,6$ and we know how to treat the case $u=6$. By Lemma \ref{lemma4} we reduce the cases $u=2$ and $u=3$ to a finite check, and the above tables in Lemmas \ref{lemma2} and \ref{lemma3} give the candidate admissible triples. It is easy to see that all the listed triples satisfy Properties 1) -- 3).
\end{proof}


\begin{thebibliography}{9}
 \bibitem{ACM} A. Alfieri, A. Cavallo and I. Matkovi\v c, \emph{Brieskorn spheres and rational homology ball symplectic fillings},
   arXiv:2605.13812. 
    \bibitem{CM-negative} A. Cavallo and I. Matkovi\v c, \emph{Fillable structures on negative-definite Seifert fibred spaces},
     arXiv:2604.28174. 
    \bibitem{CM} A. Cavallo and I. Matkovi\v c, \emph{Heegaard Floer homology and maximal twisting numbers},
      arXiv:2604.28162.
    \bibitem{DHM} I. Dai, M. Hedden and A. Mallick, \emph{Corks, involutions, and Heegaard Floer homology},
    J. Eur. Math. Soc., \textbf{25} (2023), no. 6, pp. 2319--2389.
   \bibitem{EH} J. Etnyre and K. Honda, \emph{On the nonexistence of tight contact structures},
    Ann. Math. (2), \textbf{153} (2001), no. 3, pp. 749--766. 
   \bibitem{FS} R. Fintushel and R. Stern, \emph{Instanton homology of Seifert fibred homology three spheres}, 
    Proc. London Math. Soc., \textbf{61} (1990), pp. 109--137.
   \bibitem{GvHM} P. Ghiggini and J. Van Horn-Morris, \emph{Tight contact structures on the Brieskorn spheres $-\Sigma(2,3,6n-1)$ and contact invariants},
   J. Reine Angew. Math., \textbf{718} (2016), pp. 1--24.  
  \bibitem{ImC-e} A. Issa and D. McCoy, \emph{On Seifert fibered spaces bounding definite manifolds},
   Pac. J. Math., \textbf{304} (2020), no. 2, pp. 463--480.
  \bibitem{LM} P. Lisca and G. Mati\'c, \emph{Tight contact structures and Seiberg-Witten invariants}, Invent. Math., \textbf{129} (1997), pp. 509--525. 
 \bibitem{OSz-negative} P. Ozsv\'ath and Z. Szab\'o, \emph{Absolutely graded Floer homologies and intersection forms for four-manifolds with boundary},
  Adv. Math., \textbf{173} (2003), pp. 179--261.  
 \bibitem{OSz-fullpath} P. Ozsv\'ath and Z. Szab\'o, \emph{On the Floer homology of plumbed three-manifolds},
  Geom. Topol., \textbf 7 (2003), no. 1, pp. 185--224.
  \bibitem{Saveliev} N. Saveliev, \emph{Invariants for homology $3$-spheres}, 
   Encyclopaedia of Mathematical Sciences 140. Low-Dimensional Topology 1. Berlin: Springer.
 \bibitem{Savk} O. \c{S}avk, \emph{More Brieskorn spheres bounding rational balls},
   Topology Appl., \textbf{286} (2020), 107400, 11 pp.
  \bibitem{Tosun} B. Tosun, \emph{Tight small Seifert fibered manifolds with $e_0=-2$},
   Algebr. Geom. Topol., \textbf{20} (2020), no. 1, pp. 1--27. 
\end{thebibliography}
\end{document}